\newtheorem{theorem}{Theorem}[section]
\newtheorem{corollary}[theorem]{Corollary}
\newtheorem{lemma}[theorem]{Lemma}
\newtheorem{proposition}[theorem]{Proposition}
\theoremstyle{definition}
\newtheorem{definition}[theorem]{Definition}
\theoremstyle{remark}
\newtheorem{remark}[theorem]{Remark}
\newtheorem*{example}{Example}
\newcommand{\norm}[1]{\left\|#1\right\|}
\renewcommand{\epsilon}{\varepsilon} 
\newcommand{\Rr}{\mathbb{R}}
\newcommand{\Nn}{\mathbb{N}}
\newcommand{\Zz}{\mathbb{Z}}
\newcommand{\Ff}{\mathbb{F}}
\newcommand{\Kk}{\mathbb K}
\newcommand{\set}[2]{\{\,#1 \, : \, #2\,\} }
\newcommand{\Ob}{\operatorname{Ob}}
\newcommand{\Hom}{\operatorname{Hom}}
\newcommand{\id}{\operatorname{id}}
\newcommand{\cat}[1]{\mathbf{#1}}
\newcommand{\e}{\mathrm{e}}
\newcommand{\U}{ 1}
\newcommand{\sign}{\operatorname{sign}}
\title[A combinatorial approach to categorical  Möbius inversion]{A combinatorial approach to categorical  Möbius inversion and pseudoinversion}
\author{Juan Pablo Vigneaux}
\date{\today}
\begin{document}

\maketitle

\begin{abstract}
    We use Cramer's formula for the inverse of a matrix and a combinatorial expression for the determinant in terms of paths of an associated digraph (which can be traced back to Coates) to give a combinatorial interpretation of  M\"obius inversion \emph{whenever it exists}. Every M\"obius coefficient is a quotient of two sums, each indexed by certain collections of paths in the digraph. Our result contains, as particular cases, previous theorems by Hall  (for posets) and Leinster (for skeletal categories whose idempotents are identities). A byproduct is a novel expression for the magnitude of a metric space as  sum over self-avoiding paths with finitely many terms.  By means of Berg's formula, our main constructions can be extended to Moore-Penrose pseudoinverses, yielding an analogous combinatorial interpretation of M\"obius pseudoinversion and, consequently, of the magnitude of an arbitrary finite category.
\end{abstract}

\section{Introduction}

Magnitude \cite{Leinster2008,Leinster2013} is an invariant of enriched categories that was introduced by  Leinster and has attracted considerable attention in recent years (for a full bibliography, see \cite{magnitude-bibliography}). In a precise sense, it extends the notion of Euler characteristic beyond set-theoretic topology.  When applied to metric spaces, seen as categories enriched over the poset $([0,\infty],\geq)$,  magnitude provides a novel isometric invariant that encodes geometric information such as volume, surface area, and fractal dimension \cite{Meckes2015,Barcelo2018,Gimperlein2021}. 

Given a finite category $\cat A$, let $\zeta:\Ob \cat A \times \Ob \cat A\to \Rr$ be the matrix with coefficients $\zeta(a,b) = |\Hom(a,b)|$. 
The magnitude of $\cat A$ can be defined as the sum of the components of the Moore-Penrose pseudoinverse $\zeta^{+}$ \cite{Akkaya2023,Chen2023formula}.  When $\cat A$ is an ordinary category, each $\Hom(a,b)$ is a set and $|\cdot|$ denotes cardinality. When $\cat A$ is enriched, $\Hom(a,b)$ belongs to a  monoidal category and $|\cdot|$ is an appropriate measure of size. 

The particular case of posets, which can be regarded as ordinary categories whose $\Hom$-sets have at most one element, was treated in  Rota's foundational work on M\"obius inversion \cite{Rota1964}. In this case, the matrix $\zeta$ is invertible and the components of $\zeta^+=\zeta^{-1}$ are known as M\"obius coefficients. The significance of the M\"obius coefficients and hence the magnitude in Rota's combinatorial theory derived from the following theorem (that he attributed to Philip Hall): when $\cat A$ is a locally finite poset, 
\begin{equation}\label{eq:hall-formula}
   (\zeta^{-1})(a,b) = \sum_{k\geq 0} (-1)^k \#\{\text{``paths'' $a=c_0 <\cdots <c_k=b $}\}. 
\end{equation}
Rota's proof follows from the formal expansion
\begin{equation}\label{eq:infinite_expansion}
    \zeta^{-1} = \sum_{k\geq 0} (-1)^k (\zeta-\delta)^k,
\end{equation}
where $\delta$ is the identity matrix (whose components are given by Kronecker's delta). Remark that $\zeta-\delta$ is the adjacency matrix of a directed graph with vertex $\Ob \cat A$ and edges $E=\set{(i,j)}{i<j}$, hence the power $(\zeta-\delta)^k$ counts paths of length $k$ in this graph.  Because the graph is finite and has no directed cycles, the sums in \eqref{eq:hall-formula} and  \eqref{eq:infinite_expansion} have a finite number of terms.

For general categories, however, the sum in \eqref{eq:infinite_expansion} may have infinitely many terms. When the resulting series is absolutely convergent, \eqref{eq:infinite_expansion} gives an explicit formula for the M\"obius coefficients and provides the basis to identify the magnitude as an appropriate alternating sum of Betti numbers in \emph{magnitude homology} \cite[Thm. 7.14]{Leinster2021}.

 Leinster also obtained, by inspection, an alternative generalization of Hall's formula \eqref{eq:hall-formula} that holds for finite skeletal categories that do not accept any cycle $a\to c_1 \to c_2 \to \cdots \to a$ besides the identities $a\to a$ \cite[Thm. 4.1]{Leinster2008}. According to this result, each coefficient  $(\zeta^{-1})(a,b)$ can be expressed as a sum of finitely many terms, see  \eqref{eq:leinsters-formula} below.

In this article, we propose a combinatorial interpretation for the components of $\zeta^{-1}$ that holds \emph{ whenever  $\zeta$ is invertible} and that \emph{in every case only involves sums of finitely many terms}. Instead of using the expansion \eqref{eq:infinite_expansion}, our starting point is Cramer's formula. Given a total ordering $a_1,...,a_n$ of the elements of $\Ob \cat A$, and hence of $\zeta$'s rows and columns, Cramer's formula says that
\begin{equation}\label{eq:Cramer}
    (\zeta^{-1})(a_i,a_j) = \frac{(-1)^{i+j} \det \zeta[j',i']}{\det \zeta},
\end{equation}
where $\zeta[j',i']$ is the submatrix of $\zeta$ obtained by removing the $j$-th row  and the $i$-th column.
By expressing each determinant as a sum over permutations and reinterpreting each term in graph-theoretic terms along the lines of \cite{Brualdi2008}, we can rewrite  \eqref{eq:Cramer} as:
\begin{equation}\label{eq:Cramer_combinatorial}
    (\zeta^{-1})(a_i,a_j) = \frac{\sum_{C\in \mathcal C(i\to j;D(\cat A))} \sign(C)w(C)}{\sum_{L\in \mathcal L(D(\cat A))} \sign(L)w(L)}.
\end{equation} 
In this formula, the denominator's sum is indexed by finitely many  \emph{linear subdigraphs} of a digraph $D(\cat A)$ associated with $\cat A$, and the numerator's sum by finitely many \emph{connections} of $i$ to $j$ in $D(\cat A)$.
The general definition of linear subdigraphs and connections, along with their weights ($w$) and their signatures ($\sign$), can be found in Sections \ref{sec:digraphs}. The digraph  $D(\cat A)$ is described in Section  \ref{sec:comb_int_cats}. The identity \eqref{eq:Cramer_combinatorial} is the subject of Theorem \ref{thm:combinaotiral-Moebius}. The resulting expression for $(\zeta^{-1})(a_i,a_j) $ \emph{does not depend on the total order chosen above.}

Given a metric space $(X,d)$, one can introduce a $([0,\infty],\geq)$-enriched category $\cat A_{X,d}$ such that $\Ob \cat A_{X,d}=X$ and $\Hom(x,y) = d(x,y)$. In this case, the digraph $D(\cat A)$ is the complete digraph with vertex set $X$, and $\zeta(x,y) = \exp(-d(x,y))$.  
Proposition \ref{prop:det_and_moebius_metric_spaces} specializes our quotient formula to this case. We shall see that many of the terms in the involved sums cancel with each other in a way that is reminiscent of boundaries in magnitude homology (where colinearity plays a key role), which suggests the possibility of expressing each coefficient of $\zeta^{-1}$ as a quotient of alternating sums of Betti numbers. 

Our results also entail a novel formula for the magnitude of a finite metric space $(X,d)$ that expresses it as a sum over ``self-avoiding'' paths with finitely many terms. 

\begin{proposition} Let $(X,d)$ be a finite metric space. Then,
\begin{equation}\label{eq:formula_magnitude_paths}
    \chi(\cat A_{X,d})    = \sum_{k=0}^{\# X-1} \sum_{X'=\{x_0,x_1,...,x_k\}\subset X} (-1)^{k} \frac{\det \zeta_{X\setminus X'} }{\det \zeta_{X}}\sum_{\substack{k\text{-paths }\gamma\\
    \text{with vertices }X'}} w(\gamma), 
\end{equation}
where the weight of a path $\gamma:x_{i_1}\to x_{i_2} \to \cdots \to x_{i_k}$ is
\begin{equation}
    w(\gamma) = e^{- d(x_{i_1},x_{i_2})} \cdots e^{- d(x_{i_{k-1}},x_{i_k})}.
\end{equation}
By convention, a path of length $0$ is just a single point $\{x_0\}$ with weight $1$, and $\det \zeta_\emptyset = 1$.
\end{proposition}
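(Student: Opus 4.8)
The plan is to turn \eqref{eq:formula_magnitude_paths} into a single determinant identity and then expand that determinant with the combinatorial (Coates) formula for the determinant that already underlies Theorem~\ref{thm:combinaotiral-Moebius}. Write $n=\#X$ and $\zeta=\zeta_X$, the $n\times n$ matrix with entries $\zeta(x,y)=e^{-d(x,y)}$; assuming $\zeta$ is invertible, so that the magnitude equals the sum of all entries of $\zeta^{-1}$, we have $\chi(\cat A_{X,d})=\mathbf 1^{\top}\zeta^{-1}\mathbf 1$. First I would border $\zeta$ with a column and a row of ones and a zero in the corner,
\[
\widetilde\zeta=\begin{pmatrix}\zeta & \mathbf 1\\ \mathbf 1^{\top} & 0\end{pmatrix},
\]
and apply the Schur-complement identity $\det\widetilde\zeta=\det(\zeta)\bigl(0-\mathbf 1^{\top}\zeta^{-1}\mathbf 1\bigr)$, which gives $\chi(\cat A_{X,d})=-\det\widetilde\zeta/\det\zeta_X$. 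So it suffices to compute $\det\widetilde\zeta$ combinatorially.

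Next I would view $\widetilde\zeta$ as the weighted adjacency matrix of a digraph $D$ on the vertex set $X\sqcup\{\ast\}$: it is the complete digraph carrying a loop of weight $e^{-d(x,x)}=1$ at every $x\in X$, weight $e^{-d(x,y)}$ on the arc $x\to y$ for $x,y\in X$, weight $1$ on every arc into or out of $\ast$, and \emph{no} loop at $\ast$ (the corner entry vanishes). The determinant-as-paths formula then reads $\det\widetilde\zeta=\sum_{L}\sign(L)\,w(L)$, where $L$ ranges over the linear subdigraphs of $D$ (spanning families of pairwise vertex-disjoint directed cycles), $w(L)$ is the product of the weights of the arcs of $L$, and $\sign(L)=(-1)^{|V(L)|-c(L)}$ with $c(L)$ the number of cycles of $L$ (the sign of the permutation encoded by $L$).

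The core of the argument is a decomposition of each linear subdigraph $L$ of $D$. The vertex $\ast$ lies on exactly one cycle of $L$; since $D$ has no loop at $\ast$, that cycle has length at least $2$, hence is of the form $\ast\to x_{i_0}\to x_{i_1}\to\cdots\to x_{i_k}\to\ast$ for distinct points $x_{i_0},\dots,x_{i_k}$ of $X$. Equivalently, it encodes a $k$-path $\gamma\colon x_{i_0}\to\cdots\to x_{i_k}$ of the complete digraph on $X$ together with its closure through $\ast$, and — because the two arcs at $\ast$ have weight $1$ — its contribution to $w(L)$ is exactly $w(\gamma)$. Deleting this cycle leaves a linear subdigraph $L'$ of the complete digraph on $X\setminus X'$, where $X'=\{x_{i_0},\dots,x_{i_k}\}$ has $k+1$ elements, with $c(L)=1+c(L')$ and $w(L)=w(\gamma)\,w(L')$. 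Conversely, a choice of $k\in\{0,\dots,n-1\}$, of $X'\subseteq X$ with $\#X'=k+1$, of a $k$-path $\gamma$ with vertex set $X'$, and of a linear subdigraph $L'$ on $X\setminus X'$ reconstitutes a unique $L$. Substituting, using $\sign(L)=(-1)^{(n+1)-(1+c(L'))}=(-1)^{\,n-c(L')}$, and resumming the inner sum over $L'$ by the same determinant formula applied to the $(n-k-1)\times(n-k-1)$ matrix $\zeta_{X\setminus X'}$ — whence $\sum_{L'}(-1)^{\,n-c(L')}w(L')=(-1)^{\,k+1}\det\zeta_{X\setminus X'}$ — collapses the triple sum to
\[
\det\widetilde\zeta=\sum_{k=0}^{n-1}\ \sum_{\substack{X'\subseteq X\\ \#X'=k+1}}(-1)^{\,k+1}\det\zeta_{X\setminus X'}\sum_{\substack{k\text{-paths }\gamma\\ \text{with vertices }X'}} w(\gamma).
\]
Dividing by $-\det\zeta_X$ turns $(-1)^{k+1}$ into $(-1)^k$ and produces \eqref{eq:formula_magnitude_paths}; the stated conventions ($0$-paths are single points of weight $1$, and $\det\zeta_\emptyset=1$) are exactly the degenerate cases in which the cycle through $\ast$ has length $2$, respectively exhausts all of $X$.

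The only genuinely delicate point is the bookkeeping of signs: one must combine the $-1$ from the bordered-matrix identity, the parity shift caused by the extra vertex $\ast$ sitting inside a cycle of $L$, and the parity $(-1)^{\,n-k-1}$ relating $\sum_{L'}\sign(L')w(L')$ to the principal minor $\det\zeta_{X\setminus X'}$, and check that these three contributions multiply to precisely $(-1)^{k}$; the cases $\#X\le 2$ already display all of the cancellations, and everything else is a routine rearrangement. (One could instead avoid the bordering and prove the proposition by summing the numerator of \eqref{eq:Cramer_combinatorial} over all pairs $(i,j)$ and grouping the connections by the vertex set of their path, but the bordered-matrix route is cleaner, since it folds the two endpoints of the path into a single cycle through $\ast$.)
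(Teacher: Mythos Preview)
Your argument is correct and takes a genuinely different route from the paper's own proof. The paper simply sums the connection formula \eqref{eq:moebius_metric_with_det} over all pairs $(x,y)$, then decomposes each connection $C\in\mathcal C(x\to y;D(X))$ into its path $\gamma$ on a vertex set $X'$ and the residual linear subdigraph on $X\setminus X'$; the inner sum over those residual linear subdigraphs is recognized as $\det\zeta_{X\setminus X'}$ via Proposition~\ref{prop:formula_det}, and the signs are tracked directly. You instead introduce the bordered matrix $\widetilde\zeta$ and the auxiliary vertex $\ast$, reducing the double sum $\sum_{x,y}\mu(x,y)$ to a single determinant through the Schur complement, and then expand that determinant combinatorially. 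The two arguments are of course closely related---a connection from $x$ to $y$ in $D(X)$ is exactly a linear subdigraph of your $D$ whose $\ast$-cycle enters at $x$ and leaves at $y$---and you even flag the paper's approach in your closing parenthetical. What your route buys is a tidy packaging: the two free endpoints of the path are absorbed into one cycle through $\ast$, so you only ever manipulate linear subdigraphs and a single determinant; the paper's route avoids the bordering and the Schur identity but must keep track of connections and their distinguished path separately. Your sign bookkeeping is correct (the three parities do combine to $(-1)^k$), and the degenerate conventions match.
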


Section \ref{sec:linear_algebra} below introduces the combinatorial interpretations of the matrix determinant and the matrix inverse. Section \ref{sec:Mobius-inversion} specializes these results to M\"obius inversion. In Section 3.1 we treat the case of ordinary categories and recover previous results by Hall-Rota and Leinster. In Section 3.2 and 3.3. we cover enriched categories and metric spaces. Although  this introduces some repetitions, it might benefit readers unfamiliar with enriched categories.  Section \ref{sec:final_rmks} closes the article with some perspectives concerning (i) the generalization of our results to the M\"obius pseudoinverse $\zeta^+$ by means of Berg's formula and (ii) the possible homological simplifications among linear subdigraphs and connections.

\section{Linear algebra}\label{sec:linear_algebra}

\subsection{Basic formulae}\label{sec:linear_algebra-defs}

Let $R$ be a commutative ring and $M=R^{I}$ the module freely generated by a finite index set $I$; the set $I$ is not necessarily ordered. We denote by $(e_i)_{i\in I}$ its canonical basis.

A \emph{matrix of type $(I,I)$ with coefficients in $R$} is a map $\xi:I\times I \to R$; we set $ \xi_{i,j}:=  \xi(i,j) $. The matrix $\xi$ defines an endomorphism $u_\xi:M\to M$ via the formula
\begin{equation}\label{eq:endomorphism_map}
    \forall j\in I, \quad u_\xi(e_j) = \sum_{i\in I} \xi_{i,j} e_i.
\end{equation} 

The module $\Lambda^n(M)$ of alternating $n$-forms is isomorphic to $R$, hence   the map $\Lambda^n(u):\Lambda^n(M)\to \Lambda^n(M),\, x_1\wedge \cdots \wedge x_n \mapsto u(x_1)\wedge \cdots \wedge u(x_n)$ induced by a linear map $u:M\to M$ is an homothety $m\mapsto t m$ for some $t\in R$; this $t$ is, by definition,  the \emph{determinant} of $u$.  The determinant of the matrix  $\xi$ is the determinant of the associated endomorphism $u_\xi$.

\begin{proposition}\label{pop:form-det-permutations} Let $I$ be a finite set,  $\mathfrak S_I$ the set of its permutations, and $\xi$ a matrix of type $(I,I)$. Then 
    $$\det \xi = \sum_{\sigma \in \mathfrak S_I} \sign(\sigma) \prod_{i\in I}  \xi_{\sigma(i),i} = \sum_{\tau \in \mathfrak S_I} \sign(\tau) \prod_{i\in I}  \xi_{i,\tau(i)} .$$
\end{proposition}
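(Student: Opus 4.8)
The plan is to compute the homothety $\Lambda^n(u_\xi)$ directly on an explicit generator of $\Lambda^n(M)$, where $n:=\#I$. Since $I$ is finite I would begin by choosing an enumeration $I=\{i_1,\dots,i_n\}$; although $I$ is not intrinsically ordered, any such choice produces a generator $\omega:=e_{i_1}\wedge\cdots\wedge e_{i_n}$ of the free rank-one module $\Lambda^n(M)$, and by the definition of the determinant $\Lambda^n(u_\xi)(\omega)=(\det\xi)\,\omega$. Observe that the two sums in the statement involve no ordering of $I$ whatsoever, so it is enough to establish the first equality for one fixed enumeration, and the independence from the total order used elsewhere in the paper is automatic.

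Next I would expand, using the defining formula \eqref{eq:endomorphism_map} for $u_\xi$ together with the multilinearity of the wedge product:
\[
\Lambda^n(u_\xi)(\omega)=u_\xi(e_{i_1})\wedge\cdots\wedge u_\xi(e_{i_n})=\sum_{(k_1,\dots,k_n)\in I^{\,n}}\xi_{k_1,i_1}\cdots\xi_{k_n,i_n}\;e_{k_1}\wedge\cdots\wedge e_{k_n}.
\]
By the alternating property built into $\Lambda^n(M)$, every summand in which two of the indices $k_1,\dots,k_n$ agree is zero, so only the tuples that are bijective relabelings of $(i_1,\dots,i_n)$ survive; such a tuple is precisely the datum of a permutation $\sigma\in\mathfrak{S}_I$ with $k_m=\sigma(i_m)$. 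The remaining work is the sign bookkeeping: reordering the factors of $e_{\sigma(i_1)}\wedge\cdots\wedge e_{\sigma(i_n)}$ into the reference order $e_{i_1}\wedge\cdots\wedge e_{i_n}$ multiplies the wedge by $\sign(\sigma)$, because the alternating property forces $x\wedge y=-\,y\wedge x$ and $\sign$ is exactly the homomorphism $\mathfrak{S}_I\to\{\pm1\}$ obtained by writing a permutation as a product of transpositions. Collecting terms yields $\Lambda^n(u_\xi)(\omega)=\bigl(\sum_{\sigma\in\mathfrak{S}_I}\sign(\sigma)\prod_{m}\xi_{\sigma(i_m),i_m}\bigr)\omega$, whence $\det\xi=\sum_{\sigma\in\mathfrak{S}_I}\sign(\sigma)\prod_{i\in I}\xi_{\sigma(i),i}$.

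For the second equality I would substitute $\tau=\sigma^{-1}$: since $\sign(\sigma)=\sign(\sigma^{-1})$ and, reindexing the product via $j=\sigma(i)$, one has $\prod_{i\in I}\xi_{\sigma(i),i}=\prod_{j\in I}\xi_{j,\sigma^{-1}(j)}$, the sum over $\sigma$ turns into $\sum_{\tau\in\mathfrak{S}_I}\sign(\tau)\prod_{i\in I}\xi_{i,\tau(i)}$. I expect the only subtle point to be the sign computation over a general commutative ring: one must invoke the genuine alternating identity $v\wedge v=0$ defining $\Lambda^n$ rather than mere antisymmetry, since $2$ need not be cancellable in $R$; once that is in place, everything else is routine multilinear bookkeeping.
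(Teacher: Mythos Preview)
Your argument is correct and follows essentially the same route as the paper: the paper also obtains the first identity by a direct computation of $\Lambda^n(u_\xi)(e_{i_1}\wedge\cdots\wedge e_{i_n})$ via \eqref{eq:endomorphism_map}, and derives the second identity by the same substitution $\tau=\sigma^{-1}$, using $\sign(\sigma)=\sign(\sigma^{-1})$ and the relabelling $\prod_{i}\xi_{\sigma(i),i}=\prod_{j}\xi_{j,\sigma^{-1}(j)}$. Your write-up is simply more explicit about the multilinear bookkeeping and the alternating-versus-antisymmetric point over a general commutative ring.
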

\begin{proof}
    The leftmost identity follows from a direct computation of $\Lambda^n(u_\xi)(e_1\wedge \cdots \wedge e_n)$ using \eqref{eq:endomorphism_map}. To prove the rightmost identity, remark that $\prod_{i\in I} \xi_{\sigma(i),i} = \prod_{j\in J} \xi_{j,\sigma^{-1}(j)}$ by a relabelling of the factors, and that $\sign(\sigma) = \sign(\sigma^{-1})$  cf. the proof of Prop. 8 in \cite[Ch. 3, \S 8]{Bourbaki2007algebre}.
\end{proof}

In particular, it holds that the matrix $\xi$ and its transpose have the same determinant:  $\det \xi = \det \xi^T$.

Suppose now that $I$ is totally ordered. We can then represent $\xi$ as a squared array, with rows and columns enumerated by the elements of $I$. 
We denote by $\xi[i',j']$ the submatrix of $\xi$ obtained by removing row $i$ and column $j$. A formula attributed to Laplace says that
\begin{equation}
    \det \xi = \sum_{j=1}^n (-1)^{i+j} \xi_{i,j} \det \xi[i',j'].
\end{equation}
see equation (23) in \cite[Ch.III, \S 8]{Bourbaki2007algebre}.
By definition,  the \emph{cofactor} $\kappa_{i,j}$ is the coefficient in front of $\xi_{i,j}$ in the expansion $\det \xi =\sum_{j=1}^n \kappa_{i,j}\xi_{i,j}$:
\begin{equation}
    \kappa_{i,j} = \diffp{}{{\xi_{i,j}}}\det \xi = (-1)^{i+j}\det \xi[i',j'].
\end{equation}

\begin{proposition}[Cramer's formula]\label{prop:cramer} Let $I$ be a finite totally ordered set and $\xi$ a matrix of type $(I,I)$. If $\xi$ is invertible, then 
$$\xi^{-1}(i,j) = \frac{\kappa_{j,i}}{\det \xi} = \frac{(-1)^{i+j}}{\det \xi} \det \xi[j',i'].$$
\end{proposition}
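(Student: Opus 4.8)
The plan is to exhibit a two-sided inverse of $\xi$ that already has the shape claimed in the statement and then invoke uniqueness of inverses. Concretely, I would set $\eta$ to be the matrix with entries $\eta_{i,j}:=\kappa_{j,i}/\det\xi$---the transpose of the cofactor matrix, divided by the determinant. This is legitimate because the invertibility of $\xi$ forces $\det\xi$ to be a unit of $R$: applying the functoriality $\Lambda^n(u\circ v)=\Lambda^n(u)\circ\Lambda^n(v)$ to $u_\xi\circ u_{\xi^{-1}}=\id$ gives $(\det\xi)(\det\xi^{-1})=1$. Once $\xi\eta=\delta$ is checked, multiplying on the left by $\xi^{-1}$ yields $\eta=\xi^{-1}$, which is exactly $\xi^{-1}(i,j)=\kappa_{j,i}/\det\xi=(-1)^{i+j}\det\xi[j',i']/\det\xi$.

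To check $\xi\eta=\delta$, I would fix $i,k\in I$ and, using that the $(i,k)$ entry of a matrix product is $\sum_{j}\xi_{i,j}\eta_{j,k}$ (a direct consequence of \eqref{eq:endomorphism_map}), compute
\[
(\xi\eta)_{i,k}=\frac{1}{\det\xi}\sum_{j\in I}\xi_{i,j}\,\kappa_{k,j}.
\]
For $k=i$ the sum on the right is precisely Laplace's expansion of $\det\xi$ along row $i$, so the entry equals $1$. For $k\neq i$, the key observation is that each cofactor $\kappa_{k,j}=(-1)^{k+j}\det\xi[k',j']$ involves only the rows of $\xi$ other than the $k$-th; hence $\sum_{j}\xi_{i,j}\kappa_{k,j}$ is Laplace's expansion along row $k$ of the matrix $\xi'$ obtained from $\xi$ by overwriting its $k$-th row with a copy of its $i$-th row. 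Since $\xi'$ has two equal rows, $\det\xi'=0$, so the off-diagonal entries of $\xi\eta$ vanish. This gives $(\xi\eta)_{i,k}=\delta_{i,k}$, i.e. $\xi\eta=\delta$, and the proof is complete.

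I expect the only delicate point to be the vanishing of the determinant of a matrix with two equal rows, valid over an \emph{arbitrary} commutative ring: this has to come from the genuinely alternating nature of $\Lambda^n(M)$ (an exterior product with a repeated factor is zero), rather than from a transposition-and-sign argument, which would only establish antisymmetry and may fail when $2$ is a zero divisor; the reduction from equal columns to equal rows is then handled by $\det\xi=\det\xi^T$ from Proposition \ref{pop:form-det-permutations}. Everything else is routine manipulation of the Laplace expansion already recorded above. An alternative, slightly more computational route would bypass the adjugate altogether: expand $\det\xi[j',i']$ through the permutation formula of Proposition \ref{pop:form-det-permutations} and match its terms one-to-one with those appearing in the cofactor expansion of $\det\xi$.
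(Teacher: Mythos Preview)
Your argument is correct and is essentially the same approach as the paper's: both derive the adjugate identity from Laplace's expansion (the paper cites $\kappa^T\xi=(\det\xi)\delta$ from Bourbaki, while you verify the transposed version $\xi\kappa^T=(\det\xi)\delta$ explicitly via the ``two equal rows'' trick) and then use that $\det\xi$ is a unit when $\xi$ is invertible. Your write-up simply unpacks what the paper leaves to the reference, including the care about alternating versus antisymmetric over rings where $2$ may fail to be a unit.
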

\begin{proof}
    It also follows from Laplace's development that the \emph{cofactor matrix} $\kappa=(\kappa_{i,j})$ satisfies $\kappa^T \xi = (\det \xi) \delta$, where $\delta:I\times I\to R$ is the identity matrix  \cite[Ch. III, \S 8, eq. (26)]{Bourbaki2007algebre}. Moreover, $\xi$ is invertible if and only if $\det \xi$ is a unit of $R$ \cite[Ch. III, \S 8, Prop. 5]{Bourbaki2007algebre}.
\end{proof}

From Proposition \ref{pop:form-det-permutations} and \ref{prop:cramer}, one can obtain an expression for each coefficient $\xi^{-1}(i,j)$ as a sum over paths in an associated directed weighted graph, which is the subject of Section \ref{sec:matrix_comb_formulae}.  We summarize first the definitions concerning directed graphs that will be used in the rest of the article.

\subsection{Directed graphs}\label{sec:digraphs} We follow  \cite{Brualdi2008} with some slight variations.

A \emph{directed graph (digraph)} consists of a finite set $V$ of \emph{vertices} and a set $E\subset V\times V$ of \emph{edges}. 

An ordered pair $(v,w)\in E$ can be represented as an arrow $v\to w$. We define maps $s:E\to V$ and $t:E\to V$, called respectively source and target, by $s(v,w) = v$ and $t(v,w)=w$. When $t(e)=s(e)$, the edge $e$ is called a \emph{loop}.

Any subset $E'$ of $E$ determines a \emph{spanning subdigraph} $D=(V,E')$.

The subdigraph $D'=(V',E')$ \emph{induced} by a subset $V'$ of $V$ is such that $E'$ contains all the edges $e\in E$ that join vertices in $V'$, that is $E'=E\cap(V'\times V')$.

A \emph{weighted digraph} consists of a digraph $(V,E)$ and numbers $w(e)$ for every edge $e\in E$ called \emph{weights}. 

We can similarly introduce a \emph{multidigraph} if we allow $E$ to be a multiset of elements in $V\times V$, i.e. if we allow multiple edges oriented in the same direction between two vertices.

Given a digraph $D=(V,E)$ and $v\in V$, we define the out-degree $d_D^+(v)$ and the in-degree $d_D^-(v)$ of $v$ in $D$ via the formulas
\begin{equation}
    d_D^+(v) = \set{e\in E}{s(e)=v},\quad
    d_D^{-}(v) =\set{e\in E}{t(e)=v}.
\end{equation}
Remark that a loop $(v,v)\in E$ contributes to both. We omit the subindex $D$ if its clear from the context.

Given $D=(V,E)$ and $u,v\in V$, a \emph{walk} from $u$ to $v$ is a sequence of edges $e_1,...,e_k$, for some $k\in \Nn$, such that $s(e_1)=u$, $t(e_k)=v$ and $t(e_i)=s(e_{i+1})$ for $i=1,...,k-1$. One says that the walk has \emph{length} $k$ and  \emph{visits} vertices $x_i=t(e_i)=s(e_{i+1})$, for $i=1,...,k-1$.  The walk is a \emph{cycle} if $u=v$ and $u,x_1,...,x_k$ are all distinct; it is a \emph{path} if $u, x_1,...,x_k, v$ are all distinct.  We also talk about $k$-walk, $k$-cycle, and $k$-path if we want to emphasize the length of the corresponding walk. 

\begin{definition}[Linear subdigraph]
    A \emph{linear subdigraph} of $D=(V,E)$ is a spanning subdigraph such that each vertex has in-degree 1 and out-degree 1. Therefore, it is a spanning collection of vertex-disjoint cycles. We denote by $\mathcal L(D)$ the set of all linear subdigraphs of $D$. 
\end{definition}

Given a  linear subdigraph $L=(V,E')$ we define its  signature $\sign(L)$ as $(-1)^{\# V+c(L)}$, where $c(L)$ denotes the number of cycles in it, and its weight $w(C)$ as the product of the weights of all the edges that it comprises. 

\begin{definition}[Connection]
    A \emph{connection} of a vertex $v$ to a vertex $w$ in $D$ is a spanning subdigraph $C$ in $D$ with the following properties:\footnote{This is called a $1$-connection in \cite{Brualdi2008}.}
\begin{enumerate}
    \item For all $k\in V\setminus \{v,w\}$, $d_C^+(k)=d_C^-(k)=1$,
    \item $d_C^-(v) = 0$ (no incoming edges at $v$) and $d_C^+(w)=0$ (no outgoing edges at $w$).
    \item If $v\neq w$ then $d_C^+(v)=1$ and $d_C^{-}(w)=1$.
\end{enumerate}
We denote by $\mathcal C(v\to w; D)$ the set of all connections of $v$ to $w$ in $D$. 
\end{definition}

Remark that when $v=w$, a connection is determined by a linear subdigraph of the digraph $D_v$ induced by $V\setminus \{v\}$. When $v\neq w$, a connection is determined by a path from $v$ to $w$ that visits vertices $x_1,...,x_s$ together with a linear subdigraph of the digraph induced by $V\setminus \{v,w,x_1,...,x_s\}$. 

Given a connection $C$, we define its  signature $\sign(C)$ as $(-1)^{\# V+c(C)+1}$, where $c(C)$ denotes the number of cycles in the connection,   and its weight $w(C)$ as the product of the weights of all its edges.

\subsection{Combinatorial interpretations of the determinant and the inverse}\label{sec:matrix_comb_formulae}

   Let $\xi$ be a matrix of type $(I,I)$.
In order to state the aforementioned combinatorial interpretation of the determinant,  we introduce a weighted digraph $D(\xi)$ with vertices  $I$ that has a directed edge $i\to j$ with weight $\xi_{i,j}$ whenever $\xi_{i,j}\neq 0$. 

The results presented in this section can be traced back to Coates and have been thoroughly developed by Brualdi and Cvetkovic in \cite{Brualdi2008}. This reference \emph{defines} the determinant via the combinatorial formula in Proposition \ref{prop:formula_det}. For the convenience of the readers that might be already familiar with the standard definition in Section \ref{sec:linear_algebra-defs}, we have decided to give here self-contained proofs of Propositions \ref{prop:formula_det} and \ref{prop:comb-inverse}.

\begin{proposition}\label{prop:formula_det} Let $I$ be a finite set. For any matrix $\xi$ of type $(I,I)$, 
    \begin{equation}
        \det \xi = \sum_{L\in \mathcal L(D(\xi))} (-1)^{\# I+c(L)} w(L).
    \end{equation}
\end{proposition}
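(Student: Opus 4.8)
The plan is to start from the permutation formula for the determinant, Proposition \ref{pop:form-det-permutations}, used in the form $\det \xi = \sum_{\tau \in \mathfrak S_I} \sign(\tau) \prod_{i \in I} \xi_{i,\tau(i)}$, and to match its terms one-to-one with the linear subdigraphs of $D(\xi)$. This form is the convenient one because, in $D(\xi)$, the edge $i\to j$ carries weight $\xi_{i,j}$, so each factor $\xi_{i,\tau(i)}$ is exactly the weight of an edge $i\to\tau(i)$.

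First I would observe that a term of this sum is nonzero only when $\xi_{i,\tau(i)}\neq 0$ for every $i\in I$, i.e.\ only when the edge $i\to\tau(i)$ belongs to $D(\xi)$ for each $i$. For such a $\tau$, set $E_\tau=\{\,i\to\tau(i) : i\in I\,\}$; since $\tau$ is a bijection, in the spanning subdigraph $L_\tau:=(I,E_\tau)$ every vertex has out-degree $1$ (one edge issuing from $i$) and in-degree $1$ (the vertex $j$ is hit only by the edge $\tau^{-1}(j)\to j$), so $L_\tau\in\mathcal L(D(\xi))$. Conversely, any $L=(I,E')\in\mathcal L(D(\xi))$ determines a map $\tau_L:I\to I$ by letting $\tau_L(i)$ be the unique target of the edge of $E'$ issuing from $i$; the in-degree-$1$ condition makes $\tau_L$ injective, hence a permutation, and all its edges lie in $D(\xi)$, so $\prod_{i}\xi_{i,\tau_L(i)}\neq 0$. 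These assignments are mutually inverse, so $\tau\mapsto L_\tau$ is a bijection between $\{\,\tau\in\mathfrak S_I : \prod_i\xi_{i,\tau(i)}\neq 0\,\}$ and $\mathcal L(D(\xi))$, and under it $\prod_{i\in I}\xi_{i,\tau(i)}=w(L_\tau)$ by the very definition of the weight of a linear subdigraph.

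It then remains to identify the signs. Decomposing $\tau$ into disjoint cycles, counting each fixed point as a cycle of length one, the orbits of $\tau$ are exactly the vertex sets of the cycles of $L_\tau$ (a fixed point $i$ of $\tau$ corresponds to the loop $i\to i$ in $L_\tau$, which is a $1$-cycle), so the number $c(L_\tau)$ of cycles of $L_\tau$ equals the number of cycles in the decomposition of $\tau$. Since a cycle of length $\ell$ is a product of $\ell-1$ transpositions, a permutation of $I$ with $c$ cycles has signature $(-1)^{\#I-c}$; hence $\sign(\tau)=(-1)^{\#I-c(L_\tau)}=(-1)^{\#I+c(L_\tau)}$. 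Substituting into the permutation formula and discarding the vanishing terms yields $\det\xi=\sum_{L\in\mathcal L(D(\xi))}(-1)^{\#I+c(L)}w(L)$, as claimed.

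The only genuinely delicate point is the bookkeeping of the cycle structure: one must check that fixed points of $\tau$, loops of $L_\tau$, and the $+c(L)$ exponent in the statement are all mutually consistent, and keep in mind that $D(\xi)$ carries a loop at $i$ precisely when the diagonal entry $\xi_{i,i}$ is nonzero (so that diagonal products and permutations with many fixed points are handled correctly). Everything else is a routine rewriting of the sum over permutations.
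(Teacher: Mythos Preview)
Your proof is correct and follows essentially the same approach as the paper: both set up the bijection between permutations with nonvanishing term and linear subdigraphs of $D(\xi)$, identify the weight $\prod_i\xi_{i,\tau(i)}$ with $w(L_\tau)$, and compute $\sign(\tau)=(-1)^{\#I-c(L_\tau)}=(-1)^{\#I+c(L_\tau)}$ via the cycle decomposition. Your version is slightly more explicit about the mutual inversity of $\tau\mapsto L_\tau$ and $L\mapsto\tau_L$ and about the role of fixed points/loops, but the argument is the same.
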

\begin{proof}

    Let $n=\# I$ be the order of the matrix.
    
    Let $L=(I,E')$ be a linear subdigraph of $D(\xi)=(I,E)$.
    Because each vertex has out-degree one, there is a well-defined function $f:I\to I$ such that $(v,f(v))\in E$, and because each vertex has in-degree one, the correspondence $i\mapsto f(i)$ must be a permutation of $I$. The weight of $L$ is $w(L) = \prod_{i\in I} \xi_{i,f(i)}$. Conversely, any \emph{nonvanishing} product of the form $\prod_{i\in I} \xi_{i,\sigma(i)}$, where $\sigma$ a permutation of $I$, is the weight of a unique linear subdigraph $L_\sigma=(V, E_\sigma')$ of $D(\xi)$ obtained by including in $E_\sigma'$ the edge $e=(i,\sigma(i))$ for every $i\in I$; the edge $e$ is part of $D(\xi)$ because $\xi_{i,\sigma(i)}\neq 0$. Hence:
    \begin{equation}
        \det \xi = \sum_{\sigma \in \mathfrak S_I } \operatorname{sign}(\sigma) \prod_{i\in I} \xi_{i,\sigma(i)} = \sum_{L\in \mathcal L(D(\xi)) } \operatorname{sign}(\sigma_L) w(L).
    \end{equation}

The proposition follows by remarking that the cycle decomposition of $\sigma_L$ is represented precisely by the cycles in $L$ (including the loops), that the signature $\operatorname{sign}(\sigma_L)$ of the permutation $\sigma_L$ is the product of the cycles's signatures, and that each cycle of length $s$ has signature $(-1)^{s-1}$ (see \cite[Ch. I, \S 6 nº7]{Bourbaki2007algebre}), hence $
        \operatorname{sign}(\sigma_L)= (-1)^{n-c(L)} =(-1)^{n+c(L)}.$
\end{proof}

\begin{proposition}\label{prop:comb-inverse} Let $I$ be a finite set, and let  $\xi$ a matrix of type $(I,I)$. If  $\xi$ is invertible, then
\begin{equation}\label{eq:formula_inverse_matrix_connections}
     \xi^{-1}(i,j) = \frac{1}{\det \xi} \sum_{C\in \mathcal C(i\to j; \xi)} (-1)^{\# I +c(C)+1} w(C).
\end{equation}
\end{proposition}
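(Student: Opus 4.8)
The plan is to reduce to Cramer's formula (Proposition~\ref{prop:cramer}) and to reinterpret the cofactor $\kappa_{j,i}$ combinatorially, reusing the permutation/cycle bookkeeping from the proof of Proposition~\ref{prop:formula_det}. First I would fix an arbitrary total order on the finite set $I$; since the right-hand side of \eqref{eq:formula_inverse_matrix_connections} makes no reference to the order, nothing is lost. By Cramer, $\xi^{-1}(i,j)=\kappa_{j,i}/\det\xi$, so it suffices to prove that $\kappa_{j,i}=\sum_{C\in\mathcal C(i\to j;\xi)}(-1)^{\#I+c(C)+1}w(C)$. Using $\kappa_{j,i}=\diffp{}{{\xi_{j,i}}}\det\xi$ together with the permutation expansion $\det\xi=\sum_{\sigma\in\mathfrak S_I}\sign(\sigma)\prod_{k\in I}\xi_{k,\sigma(k)}$ from Proposition~\ref{pop:form-det-permutations}, differentiating term by term kills every permutation with $\sigma(j)\neq i$ (the variable $\xi_{j,i}$ does not occur) and, for the remaining ones, drops the factor $\xi_{j,i}$, giving
\[
  \kappa_{j,i}=\sum_{\substack{\sigma\in\mathfrak S_I\\ \sigma(j)=i}}\sign(\sigma)\prod_{k\in I\setminus\{j\}}\xi_{k,\sigma(k)};
\]
and, exactly as in Proposition~\ref{prop:formula_det}, a term vanishes unless every edge $(k,\sigma(k))$ with $k\neq j$ belongs to $D(\xi)$ (note the edge $(j,i)$ is irrelevant here), so I may restrict to such ``non-vanishing'' $\sigma$.

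Next I would build a weight- and sign-compatible bijection between these $\sigma$ and $\mathcal C(i\to j;\xi)$. Given such a $\sigma$, its functional digraph is a linear subdigraph of the complete digraph on $I$; deleting the single edge $(j,\sigma(j))=(j,i)$ opens the cycle of $\sigma$ through $j$ into a walk from $i$ to $j$, which is in fact a path since the vertices of any cycle of a permutation are pairwise distinct (when $i=j$ that cycle is a loop at $j$ and the deletion leaves no path at all). The surviving cycles of $\sigma$ then form a linear subdigraph on the complementary vertices, all of whose edges lie in $D(\xi)$ by the non-vanishing hypothesis. One checks that the resulting spanning subdigraph $C$ satisfies conditions (1)--(3) of the definition of a connection, hence $C\in\mathcal C(i\to j;\xi)$; that $\sigma\mapsto C$ is a bijection onto $\mathcal C(i\to j;\xi)$, with inverse ``reinsert the edge $j\to i$ and read off the induced permutation''; and that in the case $i=j$ this recovers the remark that $\mathcal C(j\to j;\xi)$ consists precisely of the linear subdigraphs of the digraph induced by $I\setminus\{j\}$.

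Finally I would reconcile weights and signs. The weight matches on the nose: $\prod_{k\neq j}\xi_{k,\sigma(k)}$ is the product of the weights of all edges of $C$, i.e.\ $w(C)$. For the sign, observe that $\sigma$ has exactly $c(C)+1$ cycles: the $c(C)$ cycles of the linear part of $C$, plus the cycle obtained by closing the $i$-to-$j$ path of $C$ with the edge $j\to i$ (a loop at $i$ in the degenerate case $i=j$). Since a permutation of an $n$-element set with $c$ cycles has signature $(-1)^{n-c}$ — equivalently, a cycle of length $s$ contributes $(-1)^{s-1}$, as recalled in the proof of Proposition~\ref{prop:formula_det} — this gives $\sign(\sigma)=(-1)^{\#I-(c(C)+1)}=(-1)^{\#I+c(C)+1}$. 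Substituting into the displayed expansion of $\kappa_{j,i}$ and dividing by $\det\xi$ yields \eqref{eq:formula_inverse_matrix_connections}.

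The delicate part is the bijection in the middle step: verifying that the opened cycle is genuinely a path (no repeated vertices), that conditions (1)--(3) hold for $C$, that no edge outside $D(\xi)$ is ever required, and — the easiest place to slip — that the degenerate case $i=j$ is absorbed into the same cycle count $c(C)+1$ as the case $i\neq j$, so that a single sign formula covers both. Once the bijection and this cycle count are in place, the sign arithmetic and the weight matching are purely formal.
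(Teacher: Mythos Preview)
Your proof is correct and follows essentially the same route as the paper's: both reduce to Cramer, identify $\kappa_{j,i}$ as a sum over permutations $\sigma$ with $\sigma(j)=i$, and set up the bijection with $\mathcal C(i\to j;D(\xi))$ by deleting the edge $(j,i)$ from the functional digraph, with the same cycle count $c(C)=c(L)-1$ and weight match. The only cosmetic difference is that you extract the cofactor via the derivative $\kappa_{j,i}=\partial_{\xi_{j,i}}\det\xi$ applied to the permutation expansion, whereas the paper starts from the combinatorial determinant formula (Proposition~\ref{prop:formula_det}), groups linear subdigraphs by the edge leaving $j$, and then compares with Laplace's expansion to read off $\kappa_{j,i}$.
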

\begin{proof}
  We adapt an argument in \cite[pp. 104-105]{Brualdi2008}.  
   
   First, fix $j\in I$.  For each  linear subdigraph $L=(V,E_L)\in \mathcal L(D(\xi))$, there is a unique $i\in I$  such that $e=(j,i)\in E_L$, because $j$ must have out-degree 1. Moreover, $L$ determines a connection $C_L$ of $i$ to $j$, obtained by removing the edge $e$ from $E_L$. Remark that $c(C_L)=c(L)-1$ and $w(L)=\xi_{j,i} w(C_L)$.
   
   Denote by $\mathcal L_{i,j}$ the set of linear subdigraphs $L=(V,E_L)$ of $D(\xi)$ such that $(j,i)\in E_L$. Splitting the sum in Proposition \ref{prop:formula_det}, we conclude that
   \begin{equation}
       \det \xi = \sum_{i\in V} \sum_{L\in \mathcal L_{i,j}}  (-1)^{n+c(L)} w(L) 
       = \sum_{i\in V} \xi_{j,i} \left( \sum_{L\in \mathcal L_{i,j}}  (-1)^{n+c(C_L)+1} w(C_L) \right),
   \end{equation}
hence
\begin{equation}\label{eq:combinatorial_formula_cofactor}
    \kappa_{j,i} = \sum_{L\in \mathcal L_{i,j}}  (-1)^{n+c(C_L)+1} w(C_L).
\end{equation}
by comparison with Lagrange's expansion. 

Since there is a one-to-one correspondence between $C(i\to j;D(\xi))$ and $\mathcal L_{i,j}$, equation \eqref{eq:formula_inverse_matrix_connections} follows form Cramer's formula, Proposition \ref{prop:cramer}. Finally, remark that \eqref{eq:formula_inverse_matrix_connections}  does not depend on the chosen enumeration of $I$.
\end{proof}

\begin{corollary}
   $$ \xi^{-1}(i,j) =\frac{\sum_{C\in \mathcal C(i\to j;D(\xi))} (-1)^{c(C)+1} w(C)}{\sum_{L\in \mathcal L(D(\xi))} (-1)^{c(L)} w(L).}$$
\end{corollary}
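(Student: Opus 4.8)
The statement is an immediate consequence of the two preceding propositions, so the plan is simply to combine them and track the sign factors. First I would invoke Proposition~\ref{prop:comb-inverse}, which gives
\begin{equation*}
    \xi^{-1}(i,j) = \frac{1}{\det \xi} \sum_{C\in \mathcal C(i\to j; D(\xi))} (-1)^{\# I +c(C)+1} w(C),
\end{equation*}
valid because $\xi$ is invertible. Next I would rewrite the denominator using Proposition~\ref{prop:formula_det}, namely $\det \xi = \sum_{L\in \mathcal L(D(\xi))} (-1)^{\# I+c(L)} w(L)$.

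Substituting the latter into the former, both the numerator and the denominator carry a common factor $(-1)^{\# I}$ (constant over the respective index sets $\mathcal C(i\to j;D(\xi))$ and $\mathcal L(D(\xi))$, since $\# I$ does not depend on $C$ or $L$). Pulling this factor out of each sum and cancelling it yields exactly the claimed identity. The only point that needs a word of justification is that the quotient is well-formed: the denominator equals $\det \xi$ up to the sign $(-1)^{\# I}$, and $\det \xi$ is a unit of $R$ because $\xi$ is invertible, so division is legitimate.

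I do not expect any genuine obstacle here; the content is entirely in Propositions~\ref{prop:formula_det} and~\ref{prop:comb-inverse}, and this corollary merely records the sign-cleaned quotient form of~\eqref{eq:formula_inverse_matrix_connections}. If anything deserves emphasis, it is the remark already made in the proof of Proposition~\ref{prop:comb-inverse} that neither sum depends on the chosen total ordering of $I$, so the right-hand side is manifestly order-independent.
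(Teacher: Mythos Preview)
Your proposal is correct and matches the paper's own argument: the paper simply says the corollary follows from combining Propositions~\ref{prop:formula_det} and~\ref{prop:comb-inverse}, which is exactly what you do, with the cancellation of the common factor $(-1)^{\# I}$ made explicit.
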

It follows from combining Propositions \ref{prop:formula_det} and \ref{prop:comb-inverse}, and it corresponds to \cite[Thm. 5.3.2]{Brualdi2008}.

\section{M\"obius inversion}\label{sec:Mobius-inversion}

\subsection{M\"obius function of ordinary categories} 

In what follows, $\cat{A}$ is assumed to be a finite category with $|\Ob(\cat{A})| = n$ unless otherwise stated. The definitions follow \cite{Leinster2008}.

\begin{definition}
Let $\cat{A}$ be a finite category. Its (coarse) \emph{incidence algebra} $R(\cat A)$ is the set of matrices  $ f:  \text{Ob}(\cat{A}) \times \text{Ob}(\cat{A}) \to \mathbb{Q}$, which is a rational vector space under pointwise addition and scalar multiplication of functions, equipped with the  \emph{convolution product}:
\begin{equation}
\label{eq:convolution} 
   \forall \theta, \phi \in R(\cat{A}), \, \forall a,c \in\Ob(\cat{A}),\quad   (\theta \phi)(a,c) = \sum_{b \in\Ob(\cat{A})} \theta(a,b)\phi(b,c).
\end{equation}
 The identity element for this product is  Kronecker's delta function $\delta$; it is given by the formula
\begin{equation}\label{eq:kron_delta}
    \forall a,b\in \text{Ob}(\cat A), \quad \delta(a,b) = \begin{cases}
        1& \text{ if } a=b\\
        0& \text{otherwise}
    \end{cases}.
\end{equation}
\end{definition}

 Upon enumeration of the elements of $\Ob \cat A$, the elements of $R(\cat A)$ can be written as squared arrays with rational coefficients; under this identification, the convolution product is just the usual product of matrices.

\begin{definition}
The \emph{zeta function} $\zeta$ is an element of $ R(\cat{A})$ given  by $\zeta(a,b) = |\text{Hom}(a,b)|$ for all $a,b \in\Ob(\cat{A})$. If $\zeta$ is invertible (with two-sided inverse) in $R(\cat{A})$, we say that $\cat{A}$ \emph{has M\"{o}bius inversion} and denote its inverse, the \emph{M\"obius function}, by $\mu = \zeta^{-1}$. 
\end{definition}

When a category $\cat A$ has M\"obius inversion, its \emph{Euler characteristic} or \emph{magnitude} $\chi(\cat A)$ is defined as  
\begin{equation}\label{eq:magnitude}
    \chi(\cat A) = \sum_{a,b\in \Ob\cat A} \mu(a,b).
\end{equation}

The topological relevance of the M\"obius function and the magnitude will become clear in the next section.

\subsection{Combinatorial interpretation}\label{sec:comb_int_cats}

Given a finite category $\cat A$ with $n$ objects, we denote by $D(\cat A)$ the digraph associated with its $\zeta$ function. Therefore, the vertices of $D(\cat A)$ correspond to the objects of $\cat A$, and there is an arrow from $a\to b$ with weight $|\Hom(a,b)|$ whenever $\Hom(a,b)\neq \emptyset$. 

\begin{theorem}\label{thm:combinaotiral-Moebius}
    Let $\cat A$ be a finite category with $n$ objects. One has
    \begin{equation}
        \det \zeta = \sum_{L\in \mathcal L(D(\cat A))} (-1)^{n+c(L)} w(L). 
    \end{equation}
   Moreover,  
    if $\cat A$ has M\"obius inversion, then
    \begin{equation}
        \mu(a,b) = \frac{1}{\det \zeta} \sum_{C\in \mathcal C( a\to b; D(\cat A)) } (-1)^{n+c(C)+1} w(C).
    \end{equation}
\end{theorem}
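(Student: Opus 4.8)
The plan is to specialize the general results of Section \ref{sec:matrix_comb_formulae} to the matrix $\xi=\zeta$. First I would observe that $\zeta$ is a matrix of type $(\Ob\cat A,\Ob\cat A)$ with coefficients in $\Qq$ (hence in a commutative ring, as Propositions \ref{prop:formula_det} and \ref{prop:comb-inverse} require), and that the weighted digraph $D(\zeta)$ attached to it in Section \ref{sec:matrix_comb_formulae} is literally $D(\cat A)$: both have vertex set $\Ob\cat A$, and both contain an edge $a\to b$ of weight $\zeta_{a,b}=|\Hom(a,b)|$ exactly when $\zeta_{a,b}\neq 0$, i.e.\ when $\Hom(a,b)\neq\emptyset$. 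This identification is the only thing that happens in passing from matrices to categories.

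With this in hand, the first displayed formula is an immediate instance of Proposition \ref{prop:formula_det} applied to $\xi=\zeta$, $I=\Ob\cat A$, $\#I=n$.

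For the second formula I would first recall that ``$\cat A$ has M\"obius inversion'' means precisely that $\zeta$ admits a two-sided inverse $\mu$ in the incidence algebra $R(\cat A)$; after enumerating $\Ob\cat A$, the convolution product becomes ordinary matrix multiplication, so this is the same as $\zeta$ being an invertible matrix with $\mu=\zeta^{-1}$. The formula is then exactly Proposition \ref{prop:comb-inverse} with $\xi=\zeta$. I would close by noting, as in Proposition \ref{prop:comb-inverse}, that neither $\mathcal L(D(\cat A))$ nor the sets $\mathcal C(a\to b;D(\cat A))$, nor the associated signatures and weights, refer to any ordering of $\Ob\cat A$; hence the right-hand sides — and therefore $\det\zeta$ and $\mu(a,b)$ — do not depend on the enumeration used to invoke Cramer's formula in the proof of Proposition \ref{prop:comb-inverse}.

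There is essentially no obstacle here: the theorem is a translation of Propositions \ref{prop:formula_det} and \ref{prop:comb-inverse} into categorical language. The only point that deserves a sentence of care is the equivalence between invertibility of $\zeta$ in $R(\cat A)$ and its invertibility as a matrix over $\Qq$, together with the order-independence of the combinatorial quantities, which is what makes the statement — phrased without any chosen enumeration — well posed.
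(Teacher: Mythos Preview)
Your proposal is correct and follows exactly the paper's approach: the paper's proof is a single sentence noting that ``$\cat A$ has M\"obius inversion if and only if the matrix $\zeta$ is invertible, so we apply Proposition \ref{prop:comb-inverse},'' which is precisely what you do (with Proposition \ref{prop:formula_det} implicitly covering the determinant formula). Your version is in fact more explicit about the identification $D(\zeta)=D(\cat A)$ and the order-independence, but these are elaborations of the same argument rather than a different route.
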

Remember that $w(C)$ (resp. $w(L)$) is the product of the weights corresponding to all the edges that appear in the connection $C$ (resp. linear subdigraph $L$). 
\begin{proof}
  The category $\cat A$  has M\"obius inversion if and only if the matrix $\zeta$  is invertible, so we apply Proposition \ref{prop:comb-inverse}. 
\end{proof}

\begin{example}
    Consider the category represented by 
    \begin{equation}
        \begin{tikzcd}[row sep=huge, column sep=huge]
            a \arrow[loop left, "\id_a"] \ar[r,"f" , shift left=1ex] \ar[r,"g", swap] \ar[dr, "\bar f"] \ar[dr, "\bar g", shift right=1ex, swap] & b \ar[loop right, "\id_b"] \ar[d, "h_1" description, shift right=1ex,swap]\ar[d, "h_2" description, shift left=1.5ex, swap] \ar[d, "h_3" description, shift left=4ex]  \\
            & c \ar[loop right, "\id_c"]
        \end{tikzcd}
    \end{equation}
where $h_i \circ f = \bar f$ and $h_i \circ g = \bar g$ for  $i\in \{1,2,3\}$. \footnote{This could be a subcategory of $\cat{Top}$, where $a$ is a two-point space; $b=c=\Rr^2$; $f$ and $g$ inclusions with images $\{(a_f,0), (-a_f,0)\}$ and $\{a_g,0),(-a_g,0)\}$ respectively, for some $a_f,a_g>0$, $a_f\neq a_g$;  $h_1$ a $\pi$ rotation; $h_2$ a $-\pi$ rotation; and $h_3$ a reflection around the $y$-axis.}

If we order $\Ob \cat A$ as $(a,b,c)$, then we can write
\begin{equation}
    \zeta=\begin{bmatrix}
        1 & 2 & 2 \\
        0 & 1 & 3\\
        0 & 0 & 1 
    \end{bmatrix},
\end{equation}
hence its determinant is 1. To get, for instance, $\zeta^{-1}(2,3)$, we need to determine the connections of $a$ to $c$ in $\cat A$. There are 2 connections with $c(C)=1$ given by associated $1$-walks $\bar f:a\to c$ and $\bar g:a\to c$, and 6 connections with $c(C) = 0$ whose associated $2$-walks are the  compositions of $h_i\circ f$ or $h_i \circ g$, for $i=1,2,3$. Hence $\zeta^{-1}(2,3) = (-1)^{3+1+1} 2 +(-1)^{2+0+1} 6 = 6-2=4$. 

In turn, $\zeta^{-1}(i,i)=1$ for $i=1,2,3$, because the only connection of $i$ to $i$ correspond to the cycle decomposition of $\{1,2,3\}\setminus \{i\}$ given by loops, and the associated sign is $(-1)^{3+2+1}=1$. 
\end{example}

Let $\cat A$ be a category and $a,b\in \Ob \cat A$. An $k$-walk from $a$ to $b$ in $\cat A$ is a diagram
\begin{equation}\label{eq:kwalk-in-cat}
    \begin{tikzcd}
        a = c_0 \ar[r,"f_1"] & c_1 \ar[r, "f_2"] &\cdots  \ar[r, "f_k"] & c_k = b.
    \end{tikzcd}
\end{equation}
This $k$-walk is called a circuit if $a=b$, and it is called \emph{nondegenerate} if no $f_i$ is an isomorphism. We say that the $k$-walk is a $k$-path if the objects $c_0,...,c_k$ are all distinct.

Note that a $k$-walk in $\cat A$ is a $k$-walk  in the underlying multidigraph $M(\cat A)$ that has vertices $\Ob \cat A$ and  edges $\bigcup_{a,b} \bigcup_{f\in \Hom(a,b)} \{ (s(f),t(f))\}$. So the terminology here is compatible with the one introduced above in Section \ref{sec:digraphs} for directed graphs, but is in conflict with Leinster's: he uses $k$-path instead of $k$-walk.

\begin{corollary}\label{cor:vanishing_mu}
    Let $\cat A$ be a finite category with $n$ objects. Suppose $\cat A$ has M\"obius inversion. For any $a,b\in \Ob \cat A$, if $\zeta(a,b)=0$ then $\mu(a,b)=0$.
\end{corollary}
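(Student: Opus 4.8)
The plan is to show that the numerator in the combinatorial formula for $\mu(a,b)$ provided by Theorem \ref{thm:combinaotiral-Moebius} vanishes termwise whenever $\zeta(a,b) = 0$. Recall that $\zeta(a,b) = |\Hom(a,b)|$, so the hypothesis $\zeta(a,b) = 0$ means that $\Hom(a,b) = \emptyset$; equivalently, $D(\cat A)$ has no edge $a \to b$. I would first dispose of the case $a = b$: since $\cat A$ always has the identity $\id_a \in \Hom(a,a)$, one has $\zeta(a,a) \geq 1 > 0$, so the hypothesis $\zeta(a,b) = 0$ forces $a \neq b$. Thus it suffices to treat the off-diagonal case.

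For $a \neq b$, Theorem \ref{thm:combinaotiral-Moebius} gives
\begin{equation}
    \mu(a,b) = \frac{1}{\det \zeta} \sum_{C \in \mathcal C(a \to b; D(\cat A))} (-1)^{n+c(C)+1} w(C),
\end{equation}
and since $\cat A$ has M\"obius inversion, $\det \zeta$ is a unit, so it is enough to show that $\mathcal C(a \to b; D(\cat A)) = \emptyset$. By the remark following the definition of connections, when $a \neq b$ every connection $C$ of $a$ to $b$ is determined by a path from $a$ to $b$ in $D(\cat A)$ together with a linear subdigraph of the digraph induced on the complementary vertices. In particular, the existence of a connection of $a$ to $b$ requires the existence of at least one path, hence at least one walk, from $a$ to $b$ in $D(\cat A)$. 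But $D(\cat A)$ has an edge $u \to v$ precisely when $\Hom(u,v) \neq \emptyset$, so a walk $a = c_0 \to c_1 \to \cdots \to c_k = b$ in $D(\cat A)$ would yield morphisms $f_i \in \Hom(c_{i-1}, c_i)$ whose composite lies in $\Hom(a,b)$, contradicting $\Hom(a,b) = \emptyset$. Therefore no such walk exists, $\mathcal C(a \to b; D(\cat A))$ is empty, the numerator is an empty sum equal to $0$, and $\mu(a,b) = 0$.

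There is essentially no obstacle here; the only thing to be careful about is the direction of the implication concerning connections. One should not argue that "no edge $a \to b$" directly kills the connections — a connection from $a$ to $b$ need not use the edge $a \to b$ at all — but rather observe that a connection forces an entire directed path from $a$ to $b$, and that the composability of morphisms in a category makes the hom-set $\Hom(a,b)$ nonempty as soon as such a path exists. This categorical closure property (composition of a chain of morphisms) is exactly what makes the statement true for arbitrary categories and is the substantive content of the corollary.
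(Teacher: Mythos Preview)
Your proof is correct and follows essentially the same route as the paper: observe that $\zeta(a,b)=0$ forces $a\neq b$, then argue that $\mathcal C(a\to b;D(\cat A))$ is empty because a connection would entail a path in $D(\cat A)$, hence a composable chain of morphisms in $\cat A$, contradicting $\Hom(a,b)=\emptyset$. Your added remark distinguishing ``no edge $a\to b$'' from ``no path $a\to b$'' is a helpful clarification but does not change the argument.
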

\begin{proof}
    If $\zeta(a,b)=0$ then necessarily $a\neq b$. Also, in this case, the set $C(a\to b;D(\cat A))$ is empty: if there was a walk $a=c_1\to \cdots \to c_k=b$ in $D(\cat A)$, there would be a corresponding walk in $\cat A$ of the form \eqref{eq:kwalk-in-cat}. But the composition $f_k\circ \cdots \circ f_1$ would give a morphism from $a$ to $b$, contradicting that $\zeta(a,b)=|\Hom(a,b)| = 0$.
\end{proof}

\begin{remark}
    One does not really need a category to define the incidence algebra $R(\cat A)$ or a distinguished element such as the zeta function (a weighted digraph would suffice). However, in the preceding proof, the composition of morphisms is crucial. 
\end{remark}

The proof of Corollary \ref{cor:vanishing_mu} is essentially already present in  Leinster's article cf. \cite[Thm. 4.1]{Leinster2008}, except that he works directly with Cramer's formula.   A detailed examination of his argument suggested the general approach that we are proposing  here. 

 Leinster, however, did not use Cramer's formula to deduce Proposition \ref{thm:combinaotiral-Moebius} or a particular case of it. He proved by different means a combinatorial interpretation for the M\"obius inverse of categories that are skeletal and whose only morphisms are idempotents (for instance, a poset). As an illustration of our method, we now show how to recover Leinster's theorem from ours. The following elementary lemma is needed.

\begin{lemma}\label{lem:leinster-idempotents}
    The following conditions on a finite category $\cat A$ are equivalent:
    \begin{enumerate}
        \item Every idempotent in $\cat A$ is an identity.
        \item Every endomorphism in $\cat A$ is an automorphism.
        \item Every circuit in $\cat A$ consists entirely of isomorphisms. 
    \end{enumerate}
\end{lemma}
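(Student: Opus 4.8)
The plan is to prove the three conditions equivalent by establishing a cycle of implications, say $(1)\Rightarrow(2)\Rightarrow(3)\Rightarrow(1)$, using only elementary facts about finiteness and the pigeonhole principle. The central finiteness tool is this: in a finite category, for any endomorphism $e:a\to a$, the sequence $e, e^2, e^3,\dots$ takes only finitely many values in $\Hom(a,a)$, so there exist $m>n\geq 1$ with $e^m=e^n$; setting $k=m-n$ one gets $e^{n}=e^{n+k}=e^{n+2k}=\cdots$, and then $e^{N}$ is idempotent for any sufficiently large multiple $N$ of $k$ (concretely, take $N$ a multiple of $k$ with $N\geq n$, so that $e^{N}e^{N}=e^{2N}=e^{N}$). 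This is the one nontrivial observation; everything else is bookkeeping.

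For $(1)\Rightarrow(2)$: let $e:a\to a$ be any endomorphism. By the remark above, some power $e^{N}$ ($N\geq 1$) is idempotent, hence by (1) equal to $\id_a$. Writing $N\geq 1$, this exhibits $e^{N-1}$ as a two-sided inverse of $e$ (using $e\cdot e^{N-1}=e^{N}=\id_a=e^{N-1}\cdot e$), so $e$ is an automorphism. For $(2)\Rightarrow(3)$: a circuit is a walk $a=c_0\xrightarrow{f_1}c_1\to\cdots\xrightarrow{f_k}c_k=a$; the composite $g=f_k\circ\cdots\circ f_1$ is an endomorphism of $a$, hence by (2) an automorphism. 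I would then argue that each individual $f_i$ is an isomorphism: $f_1$ has the left inverse $g^{-1}\circ f_k\circ\cdots\circ f_2$, and symmetrically the composite $h=f_{i-1}\circ\cdots\circ f_1\circ f_k\circ\cdots\circ f_i$ is an endomorphism of $c_{i-1}$ (the source of $f_i$), again an automorphism by (2), which gives $f_i$ a left inverse; applying the same to the endomorphism of $c_i$ obtained by rotating the circuit the other way gives $f_i$ a right inverse, so $f_i$ is an isomorphism. Finally $(3)\Rightarrow(1)$ is immediate: if $e:a\to a$ is idempotent, then $e$ by itself is a circuit ($k=1$), so by (3) $e$ is an isomorphism; composing $e^2=e$ with $e^{-1}$ yields $e=\id_a$.

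The main obstacle — though a mild one — is the argument in $(2)\Rightarrow(3)$ that passes from ``the total composite around the circuit is invertible'' to ``each edge is invertible.'' One has to be a little careful that rotating the circuit to base it at a different object $c_i$ produces a genuine endomorphism of that object, and that the left/right inverses one extracts for $f_i$ actually agree (they do, by the standard uniqueness-of-inverses argument once both exist). An alternative, perhaps cleaner, route for this implication is to prove directly that in a finite category every split monomorphism that is also an endomorphism is an isomorphism, and then note that each $f_i$ in a circuit is a retract/section in the appropriate sense; but I expect the rotation argument above to be the most transparent. I would present the finiteness lemma once as a displayed observation and then reuse it in $(1)\Rightarrow(2)$ and $(3)\Rightarrow(1)$, keeping the whole proof to a short paragraph.
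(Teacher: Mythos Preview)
Your argument is correct. The paper itself does not supply a proof of this lemma: it simply cites \cite[Lem.~1.3]{Leinster2008}. What you have written is essentially the standard proof (and is the one Leinster gives): produce an idempotent power of any endomorphism in a finite monoid to get $(1)\Rightarrow(2)$, rotate the circuit to exhibit each $f_i$ as both a left and a right factor of an automorphism for $(2)\Rightarrow(3)$, and observe that an invertible idempotent is an identity for $(3)\Rightarrow(1)$.

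Two minor remarks. First, your closing meta-comment says you will ``reuse'' the idempotent-power observation in $(3)\Rightarrow(1)$; in fact you do not (and need not), since there you are already handed an idempotent. Second, in $(2)\Rightarrow(3)$ it is worth saying explicitly (as you hint) that the left inverse $h_{i-1}^{-1}\circ u$ and the right inverse $u\circ h_i^{-1}$ of $f_i$ coincide by the usual argument $l=l(fr)=(lf)r=r$, so that $f_i$ is genuinely an isomorphism; your sketch is fine but this one line removes any doubt.
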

For a proof, see \cite[Lem. 1.3]{Leinster2008}.

\begin{corollary}[Leinster]\label{cor:leinsters-theorem}
    Let $\cat A$ be a finite skeletal category in which the only idempotents are identities. Then $\cat A$ has M\"obius inversion and
    \begin{equation}\label{eq:leinsters-formula}
        \mu(a,b) = \sum_{\mathrm{paths}\,a=c_0\to \cdots \to c_k=b\,\mathrm{in}\, M(\cat A)} \frac{(-1)^k}{|\Hom(c_0,c_0)| \cdots |\Hom(c_k,c_k)|} 
    \end{equation}
\end{corollary}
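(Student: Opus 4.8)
The plan is to feed the hypotheses into Theorem~\ref{thm:combinaotiral-Moebius} and show that, under them, both the linear subdigraphs and the connections of $D(\cat A)$ are as degenerate as possible. The conceptual heart is the structural claim: \emph{the only cycles in $D(\cat A)$ are loops, there is exactly one loop at each vertex, and the same holds for every subdigraph of $D(\cat A)$ induced by a subset of its vertices.} A loop at $a$ is present precisely because $\id_a\in\Hom(a,a)$, and it is the unique loop at $a$ in the simple digraph $D(\cat A)$. Suppose there were a cycle $a=c_0\to c_1\to\cdots\to c_k=a$ in $D(\cat A)$ with $k\ge 2$ and $c_0,\dots,c_{k-1}$ distinct; picking a morphism $f_i\in\Hom(c_{i-1},c_i)$ for each $i$ produces a circuit in $\cat A$, which by Lemma~\ref{lem:leinster-idempotents} consists entirely of isomorphisms. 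Then $f_1$ is an isomorphism $c_0\to c_1$, so $c_0\cong c_1$, hence $c_0=c_1$ by skeletality, contradicting distinctness. The same argument applies verbatim inside any induced subdigraph, since a cycle there is a cycle of $D(\cat A)$ and loops at the surviving vertices are retained.

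Given this, the determinant is immediate: a linear subdigraph of $D(\cat A)$ is a vertex-disjoint family of cycles covering all $n$ vertices, hence the unique one $L_0$ made of all $n$ loops, with $c(L_0)=n$. Theorem~\ref{thm:combinaotiral-Moebius} then gives $\det\zeta=(-1)^{2n}w(L_0)=\prod_{a\in\Ob\cat A}|\Hom(a,a)|$, a positive integer, so $\zeta$ is invertible over $\Qq$ and $\cat A$ has M\"obius inversion. For the connections: by the remark following the definition of connections, a connection of $a$ to $b$ with $a\ne b$ is a path $a=c_0\to\cdots\to c_k=b$ in $D(\cat A)$ together with a linear subdigraph of the subdigraph induced on $V\setminus\{c_0,\dots,c_k\}$, and the latter is forced to be all loops by the structural claim. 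So connections $a\to b$ are in bijection with paths $a\to b$ in $D(\cat A)$; the corresponding $C_\gamma$ has $c(C_\gamma)=n-(k+1)$ and weight $w(C_\gamma)=\prod_{i=1}^k|\Hom(c_{i-1},c_i)|\cdot\prod_{c\notin\{c_0,\dots,c_k\}}|\Hom(c,c)|$. (When $a=b$ the unique connection is all loops except at $a$, matching the length-$0$ path.)

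Substituting into Theorem~\ref{thm:combinaotiral-Moebius} with $\det\zeta=\prod_c|\Hom(c,c)|$, the sign simplifies as $(-1)^{n+(n-k-1)+1}=(-1)^k$ and the weight-over-determinant collapses, giving
\[
\mu(a,b)=\sum_{\substack{\text{paths }a=c_0\to\cdots\to c_k=b\\\text{in }D(\cat A)}}(-1)^k\,\frac{\prod_{i=1}^k|\Hom(c_{i-1},c_i)|}{\prod_{i=0}^k|\Hom(c_i,c_i)|}.
\]
To reach \eqref{eq:leinsters-formula} I would re-index over $M(\cat A)$: a path in $M(\cat A)$ from $a$ to $b$ is a sequence of distinct objects $c_0,\dots,c_k$ together with a choice of morphism in each $\Hom(c_{i-1},c_i)$, so for a fixed admissible object-sequence (equivalently a path in $D(\cat A)$) there are exactly $\prod_{i=1}^k|\Hom(c_{i-1},c_i)|$ such $M(\cat A)$-paths, each contributing $(-1)^k/(|\Hom(c_0,c_0)|\cdots|\Hom(c_k,c_k)|)$; summing over them recovers the displayed sum, which is \eqref{eq:leinsters-formula}.

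I expect the only delicate points to be the sign bookkeeping (tracking $\#V$, $c(C)$, and the extra $+1$ in $\sign(C)$) and handling the degenerate length-$0$ path on the same footing as the rest, including the $a=b$ case; the structural fact about cycles is the substantive step but is short once Lemma~\ref{lem:leinster-idempotents} and skeletality are invoked.
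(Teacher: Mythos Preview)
Your proposal is correct and follows essentially the same approach as the paper: use Lemma~\ref{lem:leinster-idempotents} together with skeletality to force every cycle in $D(\cat A)$ to be a loop, deduce that the unique linear subdigraph is the all-loops one (giving $\det\zeta=\prod_a|\Hom(a,a)|$), identify connections with paths, compute the sign as $(-1)^k$, and re-index over $M(\cat A)$. Your treatment is slightly more explicit about induced subdigraphs and the $a=b$ case, but the argument is the same.
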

Under these assumptions, $\Hom(c,c)$ is the automorphism group of $c\in \Ob \cat A$. Remark also that the paths $c_0 \to \cdots \to c_k$ counted  here are always nondegenerate: since $c_i\neq c_{i+1}$, the morphism $f_i:c_i\to c_{i+1}$ cannot be an isomorphism because $\cat A$ is skeletal.

\begin{proof}
Every cycle in $D(\cat A)$ is a circuit; a circuit consists entirely of isomorphisms according to Lemma \ref{lem:leinster-idempotents} but since $\cat A$ is also assumed to be skeletal, the cycle must be a loop. Hence:
\begin{enumerate}
    \item\label{only-lin-subgraph} The only linear subdigraph of $D(\cat A)$ is the subdigraph $L_0$ given solely by loops. In particular, $c(L_0)=n$.
    \item\label{equivalence-connections} A connection of $a$ to $b$ in $D(\cat A)$  is uniquely determined by a path from $a$ to $b$, because the cycle decomposition of the digraph induced by the unvisited vertices is trivially given by loops.
\end{enumerate}

It follows from the first of these observations that
\begin{equation}
    \det \zeta = (-1)^{n+c(L_0)} w(L_0) = \prod_{a\in \Ob\cat A} |\Hom(a,a)|.
\end{equation}

The weight of the connection $C_\gamma$ determined by the path $\gamma:a=c_0\to \cdots \to c_k=b$ in $D(\cat A)$ is $w(C_\gamma)=|\Hom(c_0,c_1)|\cdots |\Hom(c_{k-1},c_k)| \prod_{c\in \Ob\cat A\setminus\{c_1,..,c_n\} }|\Hom(c,c)|$ and its signature is $\sign(C_\gamma)=(-1)^{n+(n-(k+1))+1}=(-1)^{k}$.

According to Theorem \ref{thm:combinaotiral-Moebius}, for any $a,b\in \Ob\cat A$,
\begin{align}
    \mu(a,b) &= \frac{1}{\det \zeta} \sum_{\text{paths }\gamma:a=c_0\to \cdots \to c_k=b\text{ in }D(\cat A)} (-1)^k w(C_\gamma) \nonumber \\
    &= \sum_{\text{paths }\gamma:a=c_0\to \cdots \to c_k=b\text{ in }D(\cat A)} (-1)^k \frac{|\Hom(c_0,c_1)|\cdots |\Hom(c_{k-1},c_k)|}{|\Hom(c_0,c_0)|\cdots |\Hom(c_k,c_k)|}.
\end{align}
Since $|\Hom(c_0,c_1)|\cdots |\Hom(c_{k-1},c_k)|$ is precisely the number of different paths $c_0\to \cdots \to c_k$ in the multigraph $M(\cat A)$,  \eqref{eq:leinsters-formula} follows. 
\end{proof}

Let $(P,\leq)$ be a finite poset. We introduce an associated category $\cat P$ such that $\Ob\cat P = P$ and 
\begin{equation}
    \Hom(p,q) = \begin{cases}
        \{\ast\} & \text{if }p\leq q \\
        \emptyset & \text{otherwise}
    \end{cases}.
\end{equation}
The category $\cat P$ is such that the only endomorphisms are the identities, so the following theorem|attributed to Philip Hall (see \cite{Rota1964})|ensues from Corollary \ref{cor:leinsters-theorem}, as \cite{Leinster2008} already remarked. 

\begin{corollary}[Hall's theorem]
Let $\cat P$ be a category associated with a finite poset. Then $\cat P$ has M\"obius inversion and 
$$\mu(a,b)=\sum_{k\geq 0} (-1)^k \#\{\text{$k$-paths from $a$ to $b$}\}.
$$
\end{corollary}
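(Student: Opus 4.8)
The plan is to obtain Hall's theorem as the instance of Corollary~\ref{cor:leinsters-theorem} in which every hom-set is trivial, so the bulk of the work is checking that $\cat P$ satisfies the hypotheses of that corollary and then simplifying its conclusion. First I would verify that $\cat P$ is skeletal: if $p$ and $q$ are isomorphic objects of $\cat P$, then $\Hom(p,q)$ and $\Hom(q,p)$ are both nonempty, so $p\leq q$ and $q\leq p$, whence $p=q$ by antisymmetry of $\leq$. Next, the only idempotents of $\cat P$ are identities, because $\Hom(p,p)=\{\ast\}$ has exactly one element for every $p$, so that element is necessarily $\id_p$; equivalently, every endomorphism of $\cat P$ is trivially an automorphism, which is condition~(2) of Lemma~\ref{lem:leinster-idempotents}. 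As $\cat P$ is finite, Corollary~\ref{cor:leinsters-theorem} applies and yields
\begin{equation*}
    \mu(a,b)=\sum_{\text{paths }a=c_0\to\cdots\to c_k=b\text{ in }M(\cat P)}\frac{(-1)^k}{|\Hom(c_0,c_0)|\cdots|\Hom(c_k,c_k)|}.
\end{equation*}

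Then I would simplify and reinterpret. Every endomorphism set $\Hom(c,c)$ has exactly one element, so all denominators equal $1$ and the formula becomes $\mu(a,b)=\sum_\gamma(-1)^{\operatorname{length}(\gamma)}$, the sum ranging over paths $\gamma$ from $a$ to $b$ in $M(\cat P)$. It remains to match these with the ``$k$-paths from $a$ to $b$'' of the statement. An edge $c_i\to c_{i+1}$ of $M(\cat P)$ exists precisely when $\Hom(c_i,c_{i+1})\neq\emptyset$, i.e. when $c_i\leq c_{i+1}$, and since $|\Hom(c_i,c_{i+1})|\leq 1$ there is then exactly one such edge, so $M(\cat P)$ carries no multiplicities. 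Hence a $k$-path $a=c_0\to\cdots\to c_k=b$ in $M(\cat P)$ is the same datum as a chain $a=c_0\leq c_1\leq\cdots\leq c_k=b$ with $c_0,\dots,c_k$ pairwise distinct, that is, a strictly increasing chain $a=c_0<c_1<\cdots<c_k=b$; distinct paths give distinct chains. Grouping the sum according to the length $k$ then gives $\mu(a,b)=\sum_{k\geq 0}(-1)^k\#\{k\text{-paths from }a\text{ to }b\}$.

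I do not expect any genuine obstacle here; the argument is essentially bookkeeping once Corollary~\ref{cor:leinsters-theorem} is available. The only point worth a remark is the finiteness of the outer sum: since $\cat P$ has $n=|P|$ objects and a $k$-path visits $k+1$ distinct vertices, there are no $k$-paths with $k\geq n$, so at most $n$ terms are nonzero — in accordance with the finiteness of $\mathcal L(D(\cat P))$ and of each $\mathcal C(a\to b;D(\cat P))$ underlying Theorem~\ref{thm:combinaotiral-Moebius}.
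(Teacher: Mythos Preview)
Your proposal is correct and follows exactly the route the paper takes: derive Hall's theorem as the specialization of Corollary~\ref{cor:leinsters-theorem} to the category $\cat P$, noting that $\cat P$ is skeletal with all endomorphism sets singletons, so the denominators in \eqref{eq:leinsters-formula} are all $1$. The paper compresses this into a single sentence before the statement, but the content is the same.
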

Recall again that the $k$-paths counted here are necessarily nondegenerate. 

In a similar vein, it follows from Leinster's theorem (Corollary \ref{cor:leinsters-theorem})  that if $\cat A$ is a skeletal category whose only endomorphisms are identities, then
\begin{equation}
    \chi(\cat A) = \sum_{a,b\in \Ob \cat A} \mu(a,b)
\end{equation}
is the topological Euler characteristic of the geometric realization $B\cat A$ of its nerve $N\cat A$. The nerve $N\cat A$ is an abstract simplicial complex whose $k$-simplices are $k$-paths. The particular case of posets was already treated by Rota in his foundational work on M\"obius inversion.

\subsection{Extension to enriched categories}

Above, we have set $\zeta(a,b)=|\Hom(a,b)|$, that is the \emph{cardinality} of the \emph{set} $\Hom(a,b)$. For further generality, one can suppose that $\Hom(a,b)$ is an object of a certain category $\cat V$ and that $|\cdot|$ represents some way of measuring the ``size'' of such an object. Leinster proposed this extension in \cite{Leinster2013}, which is the source of this section.

To  make sense of morphism composition, we need this category $\cat V$ to come with a suitable product. This leads to the definition of a monoidal category, which  is an ordinary category  $\cat V$ equipped with a binary operation $\otimes:\cat V \times \cat V\to \cat V$ and a unit $\U\in \Ob \cat V$ that satisfy the axioms of a monoid up to coherent isomorphism. 

\begin{example}
    \begin{enumerate}
        \item The category $\cat V = \cat{Set}$ of sets, equipped with the cartesian product  $\otimes=\times$, and the unit $\U = \{\ast\}$.
        \item If $\Kk$ is a field,   the category $\cat V = \Kk\textrm{-}\cat{Vect}$ of $\Kk$-vector spaces, equipped with the tensor product $\otimes_{\Kk}$ and the unit $\U =\Kk$. 
        \item The poset $\cat V =([0,\infty],\geq)$, where $x\to y$ if and only if $x\geq y$, equipped with $\otimes = +$ and $\U = 0$. 
        \item $\cat V = \cat 2$, the category with objects $f$ (false) and $t$ (true) and a single nonidentity arrow $f\to t$, with product $\otimes$ given by logical conjunction and $\U = t$. 
    \end{enumerate}
\end{example}

\begin{definition}
    A $\cat V$-category $\cat A$ is given by a set of objects $\Ob \cat A$ and, for all $a,b\in \Ob \cat A$, an object $\Hom(a,b)$ of $\cat V$, together with $\cat V$-morphisms $\Hom(a,b)\otimes \Hom(b,c)\to \Hom(a,c)$ (compositions) and $\U\to \Hom(a,a)$ (identities), for any $a,b,c\in \Ob \cat A$, that are subject to the usual categorical axioms that ensure associativity of compositions and neutrality of the identities (the axioms hold  only up to coherent isomorphism in $\cat V$). 
\end{definition}

\begin{example}
    \begin{enumerate}
        \item If $\cat V = \cat{Set}$, then a $\cat V$-category is a small category.
        \item If $\cat V = \Kk\textrm{-}\cat{Vect}$, then a $\cat V$-category is a linear category.
        \item If $\cat V = ([0,\infty],\geq)$, then a $\cat V$-category is a generalized metric space in the sense of Lawvere, given by a set of points $A=\Ob\cat A$ and numbers $d(a,b):=\Hom(a,b)\in[0,\infty]$ such that 
        $$d(a,b) + d(b,c)\geq d(a,c).$$
        \item If $\cat V = \cat 2$, then a $\cat V$-category is a set equipped with a reflexive and transitive relation, that is a preorder. In particular every poset is a $\cat 2$-category.
    \end{enumerate}
\end{example}

\begin{remark}\label{rmk:embeddings_cats}
    There is an embedding of monoidal categories $\iota_1:\cat 2\hookrightarrow \cat{Set}$ such that $f\mapsto \emptyset$ and $t\mapsto \{\ast\}$. Similarly, there is an embedding  $\iota_2:\cat 2 \hookrightarrow ([0,\infty],\geq)$ such that $f\mapsto \infty$ and $t\mapsto 0$. In turn, these embeddings induce inclusions $\cat{Posets}\hookrightarrow \cat{Sets}$ and $\cat{Posets}\hookrightarrow \cat{MetricSpaces}$
\end{remark}

Let $\Kk$ be a semiring (a ring without  inverses). We introduce now a \emph{valuation}: a monoid morphism $|\cdot|:(\Ob \cat V/\cong,\otimes, 1)\to (\Kk,\cdot,1)$ defined on ismomorphisms classes of objects in $\cat V$.

\begin{example}
    \begin{enumerate}
        \item When $\cat V = \cat{FinSet}$, we can take $\Kk\supset \Zz$ and  $|X| = \#X$. 
        \item When $\cat V$ consists of finite dimensional $\Ff$-vector spaces for some field $\Ff$, we can take $\Kk\supset\Zz$ and $|X| = \dim_{\Kk}X$. 
        \item Let  $\cat V =([0,\infty],\geq)$ and $\Kk = \Rr$. Since $f(X):=|X|$ must solve the functional equation $f(x+y)=f(x)f(y)$, it must have the form $\e^{-kX}$ for some $k\in \Rr$. 
        \item When $\cat V =\cat 2$ and $\Kk$ any semiring, the only choice is  $|f|=0$ and $|t|=1$. 
    \end{enumerate}
\end{example}

Suppose now that  $\cat{A}$ is  a $\cat V$-category 
 such that  $n=|\Ob(\cat{A})|<\infty$. Let $|\cdot|:(\Ob \cat V/\cong,\otimes, 1)\to (\Kk,\cdot,1)$ denote a valuation for some semiring $(\Kk,\cdot,1)$. The (coarse) \emph{incidence algebra} of $\cat{A}$ is the set 
$R(\cat{A})$ of matrices  $ f:  \text{Ob}(\cat{A}) \times \text{Ob}(\cat{A}) \to \mathbb K$, which is a $\Kk$-semimodule under pointwise addition and scalar multiplication of functions, equipped with the  \emph{convolution product} \eqref{eq:convolution}, whose identity is again \eqref{eq:kron_delta}.

The \emph{zeta function} $\zeta\in R(\cat A)$ is given by $\zeta(a,b)=|\Hom(a,b)|$. If $\zeta$ is invertible, we say that $\cat A$ \emph{has M\"obius inversion} and the inverse $\mu=\zeta^{-1}$ is called the M\"obius function of $\cat A$. When $\cat A$ has M\"obius inversion, its magnitude is again given by \eqref{eq:magnitude}. 

If $\mathbb K$ is a commutative ring, then the formulas for $\det \zeta $ and the components of $\zeta^{-1}$ in Theorem \ref{thm:combinaotiral-Moebius} also hold in this more general framework. The only change is in the interpretation of the weight $|\Hom(a,b)|$ of an edge $a\to b$ in $D(\cat A)$.

\subsection{Magnitude of a metric space}

A finite metric space  $(X,d)$ can be seen as as a $([0,\infty],\geq)$-category $\cat X$ whose objects set is $X$ and such that $\Hom(a,b) = d(a,b)$. We consider valuations $|\cdot|_t :x\mapsto e^{-tx}$ for any $t>0$, which give rise to functions $\zeta_t\in R(\cat A)$ given by $\zeta_t(a,b) = \e^{-td(a,b)}$. 

The digraph $D(X;t):=D(\zeta_t)$ is the \emph{complete} weighted digraph with vertex set $X$ (i.e. the edge set is $X\times X$) that assigns weight $\e^{-t d(x,y)}$ to the edge $x\to y$. We set $D(X) = D(X;1)$.

\begin{proposition}\label{prop:det_and_moebius_metric_spaces}
    Let $\cat X$ be the category corresponding to a finite metric space, $\zeta_t$ the function given by  $\zeta_t(a,b)=\e^{-td(a,b)}$. Then,
        \begin{equation}\label{eq:zeta_metric_with_det}
            \det \zeta_t = \sum_{L\in \mathcal L(D(X;t))} (-1)^{\# X + c(L)} w_t(L).
        \end{equation}
  Moreover, if $\zeta_t$ has an inverse $\mu_t$, then
  \begin{equation}\label{eq:moebius_metric_with_det}
        \mu_t(x,y) = \frac{1}{\det \zeta_t} \sum_{C\in \mathcal C(x\to y; D(X;t)) } (-1)^{\# X + c(C) +1} w_t(C).
    \end{equation} 
\end{proposition}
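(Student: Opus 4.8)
The plan is to recognize that Proposition \ref{prop:det_and_moebius_metric_spaces} is a direct specialization of Theorem \ref{thm:combinaotiral-Moebius}, read through the enriched framework set up at the end of Subsection 3.3, with $\cat V = ([0,\infty],\geq)$ and the valuation $|\cdot|_t$. First I would check that $|\cdot|_t\colon x\mapsto \e^{-tx}$ is a valuation in the required sense, i.e.\ a monoid morphism $(\Ob\cat V/\cong,\otimes,\U)\to(\Kk,\cdot,1)$. Since $\cat V=([0,\infty],\geq)$ is a poset, every isomorphism is an identity, so $\Ob\cat V/\cong$ is just $[0,\infty]$; its monoidal structure is $(+,0)$; and $\e^{-t(x+y)}=\e^{-tx}\,\e^{-ty}$, $\e^{-t\cdot 0}=1$, with the convention $\e^{-t\cdot\infty}=0$. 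Taking $\Kk=\Rr$, a commutative ring, places us in exactly the situation contemplated at the end of Subsection 3.3, where the formulas of Theorem \ref{thm:combinaotiral-Moebius} are asserted to remain valid.

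Next I would identify the data. The category $\cat X$ attached to $(X,d)$ has $\Ob\cat X=X$ and $\Hom(x,y)=d(x,y)$, so $\zeta_t(x,y)=|\Hom(x,y)|_t=\e^{-td(x,y)}$, as in the statement. Because $(X,d)$ is a genuine finite metric space, all distances are finite, hence $\e^{-td(x,y)}\neq 0$ for every ordered pair $(x,y)$, and therefore the digraph $D(\cat X)=D(\zeta_t)$ has edge set all of $X\times X$: it is precisely the complete weighted digraph $D(X;t)$, with the weight of $x\to y$ equal to $\e^{-td(x,y)}$. In particular $n=\#\Ob\cat X=\#X$, and the quantities $w(L)$, $w(C)$ of Theorem \ref{thm:combinaotiral-Moebius} coincide with the $w_t(L)$, $w_t(C)$ appearing in the proposition, being products of edge weights of the form $\e^{-td(\cdot,\cdot)}$.

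The conclusion is then immediate: applying Theorem \ref{thm:combinaotiral-Moebius} (in the commutative-ring form of Subsection 3.3) to $\cat X$ yields \eqref{eq:zeta_metric_with_det}, and, whenever $\zeta_t$ is invertible, \eqref{eq:moebius_metric_with_det}. I do not expect a genuine obstacle here; the only points deserving care are the verification that $\cat X$ is a legitimate $\cat V$-category (Lawvere's triangle inequality $d(a,b)+d(b,c)\geq d(a,c)$ is exactly the composition axiom) and that the valuation lands in a commutative ring, so that the determinant-based arguments of Section \ref{sec:linear_algebra} — which were developed over an arbitrary commutative ring $R$ — transfer verbatim.
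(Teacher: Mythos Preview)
Your proposal is correct and matches the paper's approach: the paper does not give an explicit proof of Proposition~\ref{prop:det_and_moebius_metric_spaces}, treating it as an immediate specialization of Theorem~\ref{thm:combinaotiral-Moebius} via the enriched framework of Subsection~3.3 with $\cat V=([0,\infty],\geq)$, $\Kk=\Rr$, and the valuation $|\cdot|_t$. Your added verifications (that $|\cdot|_t$ is a monoid morphism, that $D(\cat X)=D(X;t)$ is the complete digraph because all $\e^{-td(x,y)}\neq 0$, and that the weights agree) are exactly the points the paper leaves implicit.
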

Recall that $c(C)$ is the number of cycles in the connection $C=(X', E)$ and $w_t(C)$ is the product of all weights $e^{-td(a,b)}$ for each edge $a\to b$ in $C$. Remark  that $w_t(C) = \exp(-t\ell(C))$, where $\ell(C)$ is the length of $C$: the sums of the weights $d(a,b)$ of all the edges $a\to b$ comprised by $C$. The expressions $c(L)$, $w_t(L)$ and $\ell(L)$ have an analogous meaning. 

\begin{remark}
As we explained in the introduction, in some cases one can make sense of the equality 
\begin{equation}\label{eq:infinite_sum_leinster}
        \mu(x,y) = \sum_{k=0}^\infty \sum_{x=x_0 \neq a_1 \neq \cdots \neq x_k = y}(-1)^k \zeta(x_0,x_1) \cdots \zeta(x_{k-1},x_k),
    \end{equation}
    where the second sum ranges over all sequences $(x_0,...,x_k)\in X^{k+1}$ such that $x_0=x$, $x_k=y$ and any two \emph{consecutive} elements are different. For this, one needs to prove that the series in \eqref{eq:infinite_sum_leinster} converges. This is the case, for instance, if $X$ is \emph{scattered }i.e. if for all $x,y\in X$ such that $x\neq y$ one has $d(x,y)>\log(\# X -1) $, see \cite[Prop. 2.1.3]{Leinster2013}. 
    
    In all cases where the sum converges, one can establish the identity $\mu\ast \zeta=\delta$  by direct computation, hence \eqref{eq:infinite_sum_leinster} is a valid expression for the M\"obius inverse, which in turn entails a similar expression for the magnitude which provides the foundation of magnitude homology \cite{Leinster2021}. At the moment, we do not see an explicit relation between these infinite sums and the finite sums in Proposition \ref{prop:det_and_moebius_metric_spaces}. 
\end{remark}

\begin{definition}
    The \emph{magnitude function} associated with the finite metric space $(X,d)$ is $t\mapsto |tX|:=\sum_{x,y\in X} \mu_t(x,y)$, defined for any $t>0$.
\end{definition}

A simple application of Proposition \ref{prop:det_and_moebius_metric_spaces} is a new proof of a well-known fact: 

\begin{corollary}
    $\lim_{t\to \infty}|tX| =\# X$.
\end{corollary}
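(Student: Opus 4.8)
The plan is to feed the finite sums of Proposition~\ref{prop:det_and_moebius_metric_spaces} into the definition $|tX| = \sum_{x,y\in X}\mu_t(x,y)$ and let $t\to\infty$, exploiting the fact that the weight of any linear subdigraph or connection $S$ of $D(X;t)$ equals $e^{-t\ell(S)}$, which tends to $0$ unless $\ell(S)=0$. Since a genuine metric satisfies $d(x,y)>0$ for $x\neq y$, and since every edge of a path, or of a cycle of length $\geq 2$, joins two distinct vertices, one has $\ell(S)=0$ if and only if $S$ consists entirely of loops $x\to x$ (each of weight $e^{-td(x,x)}=1$). This dichotomy is what drives the whole argument.

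First I would settle that $\mu_t$ exists for $t$ large. Applying \eqref{eq:zeta_metric_with_det}, the unique linear subdigraph of $D(X;t)$ of zero length is the all-loops subdigraph $L_0$, for which $c(L_0)=\#X$ and $w_t(L_0)=1$, so $(-1)^{\#X+c(L_0)}w_t(L_0)=1$ and $\det\zeta_t = 1 + \sum_{L\neq L_0}(-1)^{\#X+c(L)}e^{-t\ell(L)} \to 1$ as $t\to\infty$. In particular $\det\zeta_t\neq 0$ for $t$ sufficiently large, hence $\zeta_t$ is invertible and $\mu_t$ is defined; moreover $\det\zeta_t\to 1$ will serve as the denominator in the limit.

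Next I would evaluate the limits of the individual M\"obius coefficients via \eqref{eq:moebius_metric_with_det}. If $x\neq y$, every connection of $x$ to $y$ contains a path from $x$ to $y$ with at least one edge, hence has strictly positive length; so every term of the numerator decays and $\mu_t(x,y)\to 0$. If $x=y$, the connections of $x$ to $x$ are exactly the linear subdigraphs of the digraph induced by $X\setminus\{x\}$, and the only one of zero length is the all-loops subdigraph, which has $\#X-1$ cycles and weight $1$, contributing the sign $(-1)^{\#X+(\#X-1)+1}=1$; every other connection decays. Thus the numerator tends to $1$, and $\mu_t(x,x)\to 1/1 = 1$.

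Finally, since $X$ is finite I may interchange the limit with the (finite) sum: $\lim_{t\to\infty}|tX| = \sum_{x\in X}\lim_{t\to\infty}\mu_t(x,x) + \sum_{x\neq y}\lim_{t\to\infty}\mu_t(x,y) = \#X + 0 = \#X$. There is no serious obstacle; the only point that needs care is the elementary observation that a connection or linear subdigraph has zero length precisely when it is built from loops — which is where positivity of $d$ off the diagonal enters — together with correctly tracking the parities $(-1)^{\#X+c(L_0)}$ and $(-1)^{\#X+c(C)+1}$ so that the surviving all-loops terms both evaluate to $1$.
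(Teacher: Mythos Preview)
Your proof is correct and follows essentially the same route as the paper: both arguments feed the finite sums of Proposition~\ref{prop:det_and_moebius_metric_spaces} into the definition of $|tX|$, observe that only the all-loops configurations survive as $t\to\infty$, and check that the surviving signatures are $+1$. The only cosmetic differences are that you treat each $\mu_t(x,y)$ separately (diagonal versus off-diagonal) whereas the paper sums the numerators over all $x,y$ at once, and that you explicitly verify $\det\zeta_t\to1$ to secure existence of $\mu_t$ for large $t$, a point the paper leaves implicit.
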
 
\begin{proof}
    Let $C$  be a connection, with edges  $E\subset X\times X$. Remark that $w_t(C)\to 0$ unless $C\in \mathcal C(x\to x;D(X))$ for some $x\in X$ and $E$ consists only of loops, in which case $w_t(C) = 1$ for all $t$. It follows that $$\lim_{t\to\infty} \sum_{x,y\in X}\sum_{C\in \mathcal C(x\to y; D(X;t)) } (-1)^{\# X +  c(C) +1} w_t(C) = \sum_{x\in X} (-1)^{\# X + (\#X-1)+1} = \# X.$$ Similarly, if $L=(X,E_L)$ is a linear subdigraph of $D(X)$, then $w_t(L)\to 0$ unless $E_L$ consists solely of loops. There is only one such linear subdigraph, and it has signature $1$ and weight $1$.
\end{proof}

 A metric space $(X,d)$ has the \emph{one-point} property if $\lim_{t\to 0} |tX| =1$. The space of all $n$-point metric spaces contains a dense open subset on which the one-point property holds \cite{Roff2023small}; in particular, it holds for every nonempty compact subset of a finite dimensional subspace of $L_1[0,1]$, such as  $\ell_1^n$ or the Euclidean space of dimension $n$ \cite{Leinster2023spaces}. The known limits of $|tX|$ at $0$ and $\infty$ have justified regarding $|tX|$ as the ``effective number of points'' of the rescaled metric space $(X,td)$. However,  under that interpretation, the possible negativity of $|tX|$ (see \cite[Ex. 2.2.7]{Leinster2013}) is puzzling.  Proposition \ref{prop:det_and_moebius_metric_spaces} gives a new meaning to the magnitude's sign: it comes from the signatures of the connections and linear subdigraphs.

\begin{remark}[Some identities involving  lengths]
Let $\kappa_t$ denote the cofactor matrix of $\zeta_t$ (see Section \ref{sec:linear_algebra-defs}). In view of the expansion
\begin{equation}
w_t(G) = \exp(-t\ell(G)) = \sum_{k=0}^{\infty} \frac{\ell(G)^k (-t)^k}{k!},
\end{equation}
and the combinatorial expression for the components of $\kappa_t$ in \eqref{eq:combinatorial_formula_cofactor}, we conclude that
\begin{equation}\label{eq:expansion-cofactormat-lengths}
\sum_{x,y\in X} \kappa_t(x,y) = \sum_{k=0}^\infty \left( \sum_{C\in \mathcal C(D(X))} \frac{\sign(C) \ell(C)^k}{k!} \right) (-t)^k,
\end{equation}
where $\mathcal C(D(X)) = \bigcup_{x,y\in X} \mathcal C(x\to y;D(X))$. Similarly
\begin{equation}\label{eq:expansion-det-lengths}
\det(\zeta_t) = \sum_{k=0}^\infty \left( \sum_{L\in \mathcal L(D(X))} \frac{\sign(L) \ell(L)^k}{k!} \right) (-t)^k.
\end{equation}

Let us order the elements of $X$ and denote by $\mathbf d_i^{(k)}$ the column vector $(d(x_i,x_1)^k,...,d(x_i,x_n)^k)^T$. 
By introducing a series expansion of each component of $\zeta_t$, Roff and Yoshinaga  \cite{Roff2023small} proved that 
\begin{equation}\label{eq:expansion-cofmat-roff}
\sum_{x,y \in X} \kappa_t(x,y) = F_n(d) (-t)^{n-1} + C_n (-t)^{n} + o(t^n)
\end{equation}
and 
\begin{equation}\label{eq:expansion-det-roff}
\det(\zeta_t) = F_n(d) (-t)^{n-1} + C'_n (-t)^{n} + o(t^n),
\end{equation}
where 
\begin{align}
F_n(d) &= (-1)^{n-1} \sum_{j=1}^n \det( \mathbf{d}_1^{(1)},\dots,\mathbf d_j^{(0)},\dots, \mathbf{d}_n^{(1)}), \\
C_n &= \frac{1}{2} \sum_{\substack{i,j=1\\i\neq j}}^n \det( \mathbf{d}_1^{(1)},\dots,\mathbf{d}_i^{(0)},\ldots,\mathbf d_j^{(2)},\dots, \mathbf{d}_n^{(1)}),\\
C_n' &= C_n + \det(d)
\end{align}
By comparing the coefficients in \eqref{eq:expansion-cofactormat-lengths}-\eqref{eq:expansion-det-roff}, we obtain several identities that involve the lengths of connections and linear subdigraphs. Firstly,  for all $k\in \{0,...,n-2\}$, 
\begin{equation}
\sum_{C\in \mathcal C(D(X))} \sign(C) \ell(C)^k = \sum_{L\in \mathcal L(D(X))} \sign(L) \ell(L)^k = 0.
\end{equation}
Moreover,
\begin{align}
\sum_{C\in \mathcal C(D(X))} \sign(C) \ell(C)^{n-1} &= \sum_{L\in \mathcal L(D(X))} \sign(L) \ell(L)^{n-1} = (n-1)! F_n(d),\\
\sum_{C\in \mathcal C(D(X))} \sign(C) \ell(C)^n &= n!C_n,\\
\sum_{L\in \mathcal L(D(X))} \sign(L) \ell(L)^n &= n! C'_n.
\end{align}

\end{remark}

\subsection{A  formula involving paths}

One can write a formula for the magnitude directly in terms of weighted paths, at the cost of using not only the determinant of its function $\zeta_t$ but also of the zeta functions of all its subspaces $(\tilde X, d)$, where $\tilde X\subset X$.  We write $\det \zeta_{\tilde X,t}$ to make the dependence on both $\tilde X$ and $t$ explicit. 

\begin{proposition} Let $(X,d)$ be a finite metric space. Then,
\begin{equation}
    |tX|    = \sum_{k=0}^{\# X-1} \sum_{X'=\{x_0,x_1,...,x_k\}\subset X} (-1)^{k} \frac{\det \zeta_{X\setminus X',t} }{\det \zeta_{X,t}}\sum_{\substack{k\text{-paths }\gamma\\
    \text{with vertices }X'}} w_t(\gamma), 
\end{equation}
where the weight of a path $\gamma:x_{i_1}\to x_{i_2} \to \cdots \to x_{i_k}$ is
\begin{equation}
    w_t(\gamma) = e^{-t d(x_{i_1},x_{i_2})} \cdots e^{-t d(x_{i_{k-1}},x_{i_k})}.
\end{equation}
By convention, a path of length $0$ is just a single point $\{x_0\}$ with weight $1$, and $\det \zeta_\emptyset = 1$.
\begin{proof}
    Multiply \eqref{eq:moebius_metric_with_det} by $\det \zeta_t = \det \zeta_{X,t}$ and then sum over all $x,y\in X$ to get
    \begin{equation}
        |tX| \det \zeta_{X,t} = \sum_{x,y\in X} \sum_{C\in \mathcal C(x\to y; D(X;t)) } (-1)^{\# X + c(C) -1} w(C)
    \end{equation}
    
    Any connection $C$ from $x$ to $y$ consists of a path $\gamma:x=x_0\to x_1 \to \cdots \to x_k=y$, for some $0\leq k\leq \# X -1$, and a linear subdigraph $L$ of the digraph induced by $X\setminus X'$, where $X'=\{x_0,...,x_k\}$. By convention, when $X\setminus X'=\emptyset$, we suppose there is a unique linear subdigraph $L_\emptyset$ with $c(L_\emptyset)=0$ and $w(L_\emptyset)=1$.It holds in general that $w(C) = w(\gamma)w(L)$ and $(-1)^{\# X +c(C)+1} = (-1) ^{{k+1}+(\# X-(k+1))+c(L) +1}$. Therefore one has
    \begin{multline}
        |tX| \det \zeta_{X,t} = \\\sum_{k=0}^{\# X -1} \sum_{\substack{k-\text{path } \gamma: \\ x_0 \to \cdots \to x_k}} (-1)^k w(\gamma) \left( \sum_{L\in \mathcal L(D(X\setminus \{x_0,...,x_k\};t))} (-1)^{(\# X-(k+1))+c(L)} w(L)\right)
    \end{multline}
    The term in parenthesis is precisely $\det \zeta_{X\setminus X';t}$, again in virtue of Proposition \ref{prop:det_and_moebius_metric_spaces}. The result follows from a rearrangement of the terms.
\end{proof}
     
\end{proposition}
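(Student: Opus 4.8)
The plan is to start from the combinatorial expression for the M\"obius coefficients of a finite metric space, namely \eqref{eq:moebius_metric_with_det}, and to turn the double sum $\sum_{x,y\in X}\mu_t(x,y)$ into a sum over paths by using the structural decomposition of a connection already recorded in Section \ref{sec:digraphs}: a connection $C$ of $x$ to $y$ in the complete digraph $D(X;t)$ is the same datum as a path $\gamma:x=x_0\to x_1\to\cdots\to x_k=y$ (with $x=y$ allowed only via $k=0$, in which case $\gamma$ is the single vertex $\{x_0\}$) together with a linear subdigraph $L$ of the digraph induced by $X\setminus X'$, where $X'=\{x_0,\dots,x_k\}$. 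First I would multiply \eqref{eq:moebius_metric_with_det} through by $\det\zeta_{X,t}$ and sum over all ordered pairs $(x,y)$, obtaining
\begin{equation}
    |tX|\,\det\zeta_{X,t} \;=\; \sum_{x,y\in X}\ \sum_{C\in\mathcal C(x\to y;\,D(X;t))} (-1)^{\# X + c(C)+1}\, w_t(C).
\end{equation}

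Next I would re-index this sum according to the connection--decomposition above. Summing over all pairs $(x,y)$ and over all connections of $x$ to $y$ is the same as summing over all $k\in\{0,1,\dots,\#X-1\}$, all $k$-paths $\gamma$ in $D(X;t)$ with vertex set $X'=\{x_0,\dots,x_k\}$ (the endpoints $x=x_0$ and $y=x_k$ being read off from $\gamma$), and all linear subdigraphs $L\in\mathcal L(D(X\setminus X';t))$, with the empty case handled by the stated convention $\det\zeta_\emptyset=1$ (a unique $L_\emptyset$ with $c(L_\emptyset)=0$, $w_t(L_\emptyset)=1$). Under this bijection the multiplicativity of the weight gives $w_t(C)=w_t(\gamma)\,w_t(L)$, and counting the cycles gives $c(C)=c(L)$ together with the edge-count bookkeeping $\#X = (k+1) + (\#X-(k+1))$, so that the signature factors as
\begin{equation}
    (-1)^{\#X + c(C)+1} \;=\; (-1)^{k}\,(-1)^{(\#X-(k+1)) + c(L)}.
\end{equation}
Substituting and grouping the $\gamma$-dependent factors out of the inner sum over $L$ yields
\begin{equation}
    |tX|\,\det\zeta_{X,t} \;=\; \sum_{k=0}^{\#X-1}\ \sum_{\substack{k\text{-path }\gamma\\ \text{with vertices }X'}} (-1)^{k}\, w_t(\gamma)\left( \sum_{L\in\mathcal L(D(X\setminus X';t))} (-1)^{(\#X-(k+1))+c(L)}\, w_t(L)\right).
\end{equation}

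Finally I would recognize the parenthesized inner sum as exactly $\det\zeta_{X\setminus X',t}$, by applying the determinant formula \eqref{eq:zeta_metric_with_det} of Proposition \ref{prop:det_and_moebius_metric_spaces} to the subspace $X\setminus X'$ (whose point set has cardinality $\#X-(k+1)$), and then divide both sides by $\det\zeta_{X,t}$ and reorganize the outer sum so that the paths are grouped by their vertex set $X'$; this produces the claimed identity. The only genuinely delicate point is the bookkeeping in the bijection between connections and (path, linear subdigraph) pairs — in particular making sure the $k=0$ degenerate paths (single vertices) correspond correctly to the connections of $x$ to $x$, and that the convention $\det\zeta_\emptyset=1$ is used precisely when $X'=X$ so that the inner sum over $L$ is the empty-digraph case. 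Everything else is the multiplicativity of weights, the additivity of cycle counts, and a routine rearrangement of finite sums, so I expect no real obstacle beyond being careful with the signs and the edge counts.
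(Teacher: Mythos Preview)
Your proposal is correct and follows essentially the same approach as the paper's own proof: multiply \eqref{eq:moebius_metric_with_det} by $\det\zeta_{X,t}$, sum over $x,y$, decompose each connection into a path plus a linear subdigraph on the complement, factor the weight and signature accordingly, and recognize the inner sum as $\det\zeta_{X\setminus X',t}$ via \eqref{eq:zeta_metric_with_det}. Your version is in fact slightly more explicit about the sign bookkeeping (you spell out $c(C)=c(L)$ and the factorization of $(-1)^{\#X+c(C)+1}$), which only helps readability.
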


\section{Perspectives}\label{sec:final_rmks}

\subsection{Pseudoinversion}

In many cases, the matrix $\zeta$ associated to a category $\cat A$ is not be invertible. In fact, this is always the case if $\cat A$ is not skeletal. Nonetheless, if $\zeta$ has complex coefficients, then there is a \emph{unique} matrix $\zeta^+$ that satisfies the equations
\begin{equation}
    \zeta \zeta^+ \zeta = \zeta, \quad \zeta^+ \zeta \zeta^+=\zeta^+, \quad (\zeta\zeta^+)^* = \zeta \zeta^+, \quad (\zeta^+\zeta) = \zeta^+\zeta
\end{equation}
called the Moore-Penrose pseudoinverse of $\zeta$ \cite{Penrose1955}.  We call $\tilde \mu := \zeta^+$  the \emph{pseudo-M\"obius function}.  When $\zeta$ is invertible, it is clear that $\zeta^+ = \zeta^{-1}$ and $\tilde \mu = \mu$, the usual M\"obius function. 

One can define the magnitude of \emph{any finite category} $\cat A$ as $\chi(\cat A) = \sum_{a,b\in \Ob\cat A} \tilde \mu(a,b)$, see \cite{Akkaya2023, Chen2023formula}; this definition coincides with the original one given by Leinster using weightings and coweightings when that one applies. Akkaya  and  {\"U}nl{\"u} \cite{Akkaya2023} proved that $\chi$ is invariant under equivalence of categories.

At least part of Propositions \ref{prop:formula_det} and \ref{prop:comb-inverse}, which underlie the combinatorial interpretation of the M\"obius coefficients,   extend to  pseudoinverses by means of Berg's formula.  Let $I$  be a finite set and  $\xi$  a matrix of type $(I,I)$ and rank $r$ (perhaps strictly smaller that $|I|$). Berg  \cite[pp. 122ff]{BenIsrael2003} expressed its Moore-Penrose pseudoinverse in terms of ordinary inverses:
\begin{equation}
    \xi^+ = \sum_{(J,K) \in \mathcal N(A)} a_{J,K} (\xi|_{J\times K})^{-1},
\end{equation}
where $\mathcal N(A)$ consists of pairs $J,K$ such that $J\subset I$, $K\subset I$, $|J|=|K| = r$, and the  matrix $\xi|_{J\times K}$ of type $(J,K)$ obtained by restriction is invertible. The resulting inverse $(\xi|_{J\times K})^{-1}$ is interpreted as a matrix of type $(I,I)$ in an obvious way (induced by the inclusion $J\times K \hookrightarrow I\times I$). In turn,  
\begin{equation}
    a_{J,K} = \frac{\det^2 (\xi|_{J\times K} )}{ \sum_{(J,K)\in\mathcal N(A)} \det^2 (\xi|_{J\times K} ) }. 
\end{equation}

Suppose now that $I$ is totally ordered. Write $J=\{j_1,...,j_r\}$ and $K=\{k_1,...,k_r\}$, for increasing sequences $(j_t)_t$ and $(k_s)_s$. Proposition \ref{prop:cramer} then implies that 
\begin{equation}
    (\xi|_{J\times K})^{-1}(j,k) = \begin{cases} 
    0 &\text{if } j\notin J \text{ or }k\notin K \\
        \frac{(-1)^{s+t} \det \xi|_{J\times K}[t',s']}{\det \xi|_{J\times K}} &\text{if }j=j_s\text{ and } k=k_t \text{ for some }(s,t)
    \end{cases}.
\end{equation}
Set $J'=J\setminus \{j_t\}$ and $K'= K\setminus \{j_s\}$. By virtue of  Proposition \ref{pop:form-det-permutations},
\begin{equation}
    \det \xi|_{J\times K}[t',s'] = \det \xi|_{J'\times K'} = \sum_{\sigma\in \mathfrak S_{r-1}} \sign(\sigma) \prod_{t=1}^{r-1} \xi|_{J'\times K'}(\sigma(t),t). 
\end{equation}

The product $\prod_{t=1}^{r-1} \xi|_{J'\times K'}(\sigma(t),t)$ can be regarded as the weight of a \emph{generalized connection} $C$ in $D(\xi)$: a finite collection of pairwise disjoint paths and cycles. Recall that the vertex set of $D(\xi)$ is $I$. The connection $C$ is such that the vertices that belong to $J'$ have out-degree $1$ and those that belong to $K'$ have in-degree $1$. Hence vertices in $J'\cap K'$ have both out-degree and in-degree one, whereas those in $J'\setminus (J'\cap K')$ have only an outgoing edge in $C$ and those in $K'\setminus (J'\cap K')$ only incomming edge in $C$. It is easy to verify that $a:=|J'\setminus (J'\cap K')| = |K'\setminus (J'\cap K')| $. It follows that $C$  consists of $a$ different paths that only visit vertices in $J'\cap K'$, and that the unvisited points in $J'\cap K'$ form pairwise disjoint cycles. 

Similar remarks hold for the development of $\det \xi|_{J\times K}$ in terms of permutations; one obtains in this way \emph{generalized linear subdigraphs}. We conjecture that it is possible to describe the signature of each permutation purely in terms of topological invariants of its corresponding generalized connection or linear subsigraph. 

\subsection{Homological simplifications}

Magnitude homology \cite{Leinster2021} is an $(\mathbb N,\mathbb R)$-bigraded homology theory  $H_{*,*}(X)$ associated to a metric space $X$. For a given $\ell\in \mathbb R$, the abelian groups $H_{*,\ell}(X)$ are, by definition, the homology groups of the chain complex $(MC_{n,\ell}(X),d_n)_{n\geq 0}$, where
\begin{equation}
    MC_{n,\ell}=\mathbb Z\left[\set{\langle x_0,...,x_n \rangle}{d(x_0,x_1)+\cdots+d(x_{n-1},x_n)=\ell }\right]
\end{equation}
and $d_n:MC_{n,\ell}(X)\to MC_{n-1,\ell}$ is given by an alternating sum $d_n = \sum_{i=1}^{n-1} (-1)^i d^i_n$; each map $d_n^i$ omits an intermediate point provided this does not change the total distance:
\begin{equation}
    d_n^i(\langle x_0,...,x_n \rangle = \begin{cases}
        \langle x_0,..., x_{i-1},x_{i+1},...,x_n \rangle &\text{if }d(x_{i-1},x_i) + d(x_i,x_{i+1}) = d(x_{i-1},x_{i+1})\\
        0 &\text{otherwise}
    \end{cases}.
\end{equation}

Many terms in  \eqref{eq:moebius_metric_with_det} cancel with each other, in a way that is reminiscent of boundaries in magnitude homology \cite{Leinster2021}. We illustrate this with an example. 

\begin{example}\label{rmk:simplification}
    Consider the 6 point metric subspace $X=\{x_1,..,x_6\}$ of $(\Rr^2,\norm{\cdot}_2)$ given by $x_i=(i,0)$ and $x_{i+3}=(i,1)$ for $i=1,2,3$. A connection $C_1$ of $x_1$ to $x_3$ is given by $2$-path $x_1 \to x_2\to x_3$ and loops based at $x_4$, $x_5$ and $x_6$; this connection has positive signature $1=(-1)^{6+3-1}$ and weight $e^{-t1}e^{-t1}$. Another connection, $C_2$, of $x_1$ to $x_3$ is given by the $1$-path $x_1\to x_3$ together with loops at the other points; remark that $\sign(C_2)=-\sign(C_1)$ but $w(C_2)=w(C_1)$, so their contributions to $\mu_t(x_1,x_2)$ cancel. 

    The triple $\langle x_1,x_2,x_3\rangle$ is an element of $MC_{2,2}(X)$ and $d_2(\langle x_1,x_2,x_3\rangle ) = \langle x_1,x_3\rangle $. This last equality means that $d(x_1,x_2)+d(x_2,x_3) = d(x_1,x_3)$, which entails $w(C_1) = w(C_2)$ above. In turn, the signature of each connection depends on the degree of the magnitude homology chain given by the path. 
    \end{example}

    There is an analogous cancellation of terms in \eqref{eq:zeta_metric_with_det}: the linear subdigraphs are related by local modifications that come from transpositions.

    \begin{example}We keep the notations of the preceding example. 
    There is a linear subdigraph $L_1$ given by the circuit $x_1 \to x_2 \to x_3 \to x_6 \to x_5 \to x_4\to x_1$, which has length $6$, hence weight $e^{-6t}$, and signature $(-1)^6+1 = -1$. If one swaps the roles of $x_3$ and $x_5$ in $L_1$, one gets a linear connection $L_2$ with circuits $x_1 \to x_2 \to x_5 \to x_4 \to x_1$ and $x_3 \to x_6 \to x_3$, which has the same total length but opposite signature. (This kind of operation appears in combinatorial linear algebra, specifically in the proof that the transposition of two rows or two columns of a matrix changes its determinant by a factor $-1$, see \cite[Thm. 4.3.2]{Brualdi2008}.)
    \end{example}
    
The example suggests that it might be possible to introduce $(\mathbb N, \mathbb R)$-bigraded homology theories $ H^C_{*,*}$ and $ H^L_{*,*}$ such that 
\begin{equation}\label{eq:quotient_of_alternating_ranks}
    |tX| = \frac{\sum_{\ell\in \Rr} \left(\sum_{n\geq 0} (-1)^n \operatorname{rank} H_{n,\ell}^C \right) q^\ell }{\sum_{\ell\in \Rr} \left(\sum_{n\geq 0} (-1)^n \operatorname{rank} H_{n,\ell}^L \right) q^\ell }, 
\end{equation}
where $q=e^{-t}$. 
For this to hold, $H_{\ast, \ell}^L$ should be the homology of a complex $(MC^L_n,d^L_n)_n$ whose chains are linear subdigraphs of total weight $e^{-\ell}$, the degree of a linear subdigraph $L$ being $n=\#X - c(L)$, and whose boundary operator comes from transpositions that preserve the total weight. In turn,  $H_{\ast, \ell}^C$ should be the homology of a complex $(MC^C_n,d^C_n)$ whose chains are connections of total weight $e^{-\ell}$, the degree of a connection $C$ being $n=\# X -c(L) + 1$, and whose boundary operator comes from an appropriate combination of $d^L_\ast$ and $d_r$, acting respectively on the linear subdigraph of the connection and on the path of the connection ($r$ denotes the length of this path). Formula \eqref{eq:quotient_of_alternating_ranks} would then follow by arguments analogous to those used to establish Theorem 3.5 in  \cite{Leinster2021}. Unlike $(MC_n,d_n)$, the complexes $(MC^L_n,d^L_n)_n$and $(MC^C_n,d^C_n)_n$ would be bounded.

\section*{Acknowledgement}

The author thanks Matilde Marcolli for her support and her valuable comments. 

\bibliographystyle{ieeetr}
\bibliography{biblio}

\begin{thebibliography}{10}

\bibitem{Leinster2008}
T.~Leinster, ``The {Euler} characteristic of a category,'' {\em Documenta
  Mathematica}, vol.~13, pp.~21--49, 2008.

\bibitem{Leinster2013}
T.~Leinster, ``The magnitude of metric spaces,'' {\em Documenta Mathematica},
  vol.~18, pp.~857--905, 2013.

\bibitem{magnitude-bibliography}
T.~Leinster, ``Magnitude: a bibliography.''
  {\url{https://www.maths.ed.ac.uk/~tl/magbib/}}.
\newblock Version: 11 July 2024. Accessed: 11 July 2024.

\bibitem{Meckes2015}
M.~W. Meckes, ``Magnitude, diversity, capacities, and dimensions of metric
  spaces,'' {\em Potential Analysis}, vol.~42, pp.~549--572, 2015.

\bibitem{Barcelo2018}
J.~A. Barcel{\'o} and A.~Carbery, ``On the magnitudes of compact sets in
  {E}uclidean spaces,'' {\em American Journal of Mathematics}, vol.~140, no.~2,
  pp.~449--494, 2018.

\bibitem{Gimperlein2021}
H.~Gimperlein and M.~Goffeng, ``On the magnitude function of domains in
  {E}uclidean space,'' {\em American Journal of Mathematics}, vol.~143, no.~3,
  pp.~939--967, 2021.

\bibitem{Akkaya2023}
M.~Akkaya and {\"O}.~{\"U}nl{\"u}, ``The {Euler} characteristic of finite
  categories,'' {\em arXiv preprint arXiv:2301.08966}, 2023.

\bibitem{Chen2023formula}
S.~Chen and J.~P. Vigneaux, ``A formula for the categorical magnitude in terms
  of the {Moore-Penrose} pseudoinverse,'' {\em Bulletin of the Belgian
  Mathematical Society - Simon Stevin}, vol.~30, pp.~341--353, Nov. 2023.
\newblock To appear in Vol. 30, Issue 3.

\bibitem{Rota1964}
G.-C. Rota, ``On the foundations of combinatorial theory {I}. {Theory} of
  {M}{\"o}bius functions,'' {\em Probability theory and related fields},
  vol.~2, no.~4, pp.~340--368, 1964.

\bibitem{Leinster2021}
T.~Leinster and M.~Shulman, ``Magnitude homology of enriched categories and
  metric spaces,'' {\em Algebraic \& Geometric Topology}, vol.~21, no.~5,
  pp.~2175--2221, 2021.

\bibitem{Brualdi2008}
R.~Brualdi and D.~Cvetkovic, {\em A Combinatorial Approach to Matrix Theory and
  Its Applications}.
\newblock Discrete Mathematics and Its Applications, CRC Press, 2008.

\bibitem{Bourbaki2007algebre}
N.~Bourbaki, {\em Alg{\`e}bre. Chapitres 1 {\`a} 3.}
\newblock {\'E}l{\'e}ments de math{\'e}matique, Springer, 2007.
\newblock Reimpression of the 1970 edition.

\bibitem{Roff2023small}
E.~Roff and M.~Yoshinaga, ``The small-scale limit of magnitude and the
  one-point property,'' {\em arXiv preprint arXiv:2312.14497}, 2023.

\bibitem{Leinster2023spaces}
T.~Leinster and M.~Meckes, ``Spaces of extremal magnitude,'' {\em Proceedings
  of the American Mathematical Society}, vol.~151, no.~09, pp.~3967--3973,
  2023.

\bibitem{Penrose1955}
R.~Penrose, ``A generalized inverse for matrices,'' {\em Mathematical
  Proceedings of the Cambridge Philosophical Society}, vol.~51, no.~3,
  pp.~406--413, 1955.

\bibitem{BenIsrael2003}
A.~Ben-Israel and T.~Greville, {\em Generalized Inverses: Theory and
  Applications}.
\newblock CMS Books in Mathematics, Springer, 2003.

\end{thebibliography}

\end{document}